\newcommand{\be}{\begin{equation}}
\newcommand{\ee}{\end{equation}}
\newcommand{\cn}{{\rm \,cn}}
\newcommand{\sn}{{\rm \,sn}}
\newcommand{\dn}{{\rm \,dn}}
\newcommand{\sech}{{\rm \,sech}}
\newcommand{\Ker}{{\rm \,Ker}}
\newcommand{\K}{{\rm \,K}}
\newcommand{\E}{{\rm \,E}}
\numberwithin{equation}{section}
\numberwithin{figure}{section}
\newtheorem{theorem}{Theorem}[section]
\newtheorem{remark}[theorem]{Remark}
\newtheorem{lemma}[theorem]{Lemma}
\newtheorem{definition}[theorem]{Definition}
\begin{document}
\vglue-1cm \hskip1cm
\title[Multiple periodic waves of the Schrödinger system with cubic nonlinearity]{Spectral Stability of multiple periodic waves for the Schrödinger system with cubic nonlinearity}

\begin{center}

\subjclass{76B25, 35Q51, 35Q70.}

\keywords{Spectral stability, Periodic waves, Schrödinger system.}

\maketitle

%
%

{\bf Fábio Natali}

{Departamento de Matem\'atica - Universidade Estadual de Maring\'a\\
	Avenida Colombo, 5790, CEP 87020-900, Maring\'a, PR, Brazil.}\\
{ fmanatali@uem.br}

{\bf Gabriel E. Bittencourt Moraes}

{Departamento de Matem\'atica - Universidade Estadual de Maring\'a\\
	Avenida Colombo, 5790, CEP 87020-900, Maring\'a, PR, Brazil.}\\
{ pg54546@uem.br}

\vspace{3mm}

\end{center}

\begin{abstract}
Results concerning the existence and spectral stability/instability of multiple periodic standing wave solutions for a cubic nonlinear Schrödinger system will be shown in this manuscript. Our approach considers periodic perturbations that have the same period of the standing wave solution. To obtain the quantity and multiplicity of non-positive eigenvalues for the corresponding linearized operator, we use the comparison theorem and tools of Floquet theory. The results are then obtained by applying the spectral stability theory via Krein signature as established in \cite{KapitulaKevrekidisSandstedeI} and \cite{KapitulaKevrekidisSandstedeII}.
\end{abstract}

\section{Introduction} 

In this paper, we show the spectral stability of periodic standing waves for the cubic nonlinear Schrödinger system (NLS-system)
\begin{equation}\label{NLS-system}
	\left\{ \begin{array}{l}
		i u_t + u_{xx} + \kappa_1 |u|^2 u + \gamma v^2 \overline{u} = 0 \\
		i v_t + v_{xx} + \kappa_2 |v|^2 v + \gamma u^2 \overline{v} = 0.
	\end{array}
\right.
\end{equation}
Here $u,v: \mathbb{R} \times (0,+\infty) \rightarrow \mathbb{C}$ are complex-valued functions that are $L$-periodic in the spatial variable, $\kappa_1$ and $\kappa_2$ are positive constants and $\gamma \geq 0$. 

The NLS equation ($v=0$ in $(\ref{NLS-system})$) appears in many applications in physics and engineering as in nonlinear optics, quantum mechanics and nonlinear waves (see \cite{Boyd} and \cite{Fibich}). The cubic nonlinearity has been used as a toy model in field theory. (see \cite{DashenHasslacherNeveu}). The NLS system also appears in nonlinear optics and Bose-Einstein condensates (see \cite{Agrawal, IedaMiyakawaWadati, KannaSakkaravarthi, WangCui}).

For the NLS equation with cubic nonlinearity and periodic boundary conditions, Angulo in \cite{angulo2007} established the orbital stability of periodic standing waves solutions of dnoidal type with respect to perturbations of period $L$. The existence of cnoidal waves was also obtained in \cite{angulo2007}. However, the author did not obtain spectral or orbital stability results in the energy space $H^1_{per}$ for the cnoidal wave. Recently, Natali \textit{et al.} determined in \cite{NataliMoraesLorenoPastor} the orbital stability of cnoidal waves restricted to the subspace $H_{per,m}^1$ of zero mean periodic functions contained in $H_{per}^1$. Employing the approaches in \cite{grillakis1} and \cite{grillakis2}, along a non-degeneracy condition of a suitable $2\times 2$ Hessian matrix, Gallay and H\u ar\u agu\c s in \cite{GallayHaragusJDDE} showed that the cnoidal waves are orbitally stable with respect to semi-periodic perturbations. This work generalized the previous work \cite{GallayHaragusJDE} where the authors used similar arguments to prove the orbital/spectral stability of small amplitude waves with respect to localized or bounded perturbations. Gustafson, Le Coz and Tsai in \cite{GustafsonLeCozTsai} have established spectral stability results for the cnoidal waves with respect to perturbations with the same period $L$ and orbital stability results in the space constituted by anti-periodic functions with period $L/2$. The spectral stability follows by relating the coercivity of the linearized action with the number of eigenvalues with negative Krein signature of a certain operator $J \mathcal{L}$. 

\indent Pastor in \cite{Pastor} considered a similar NLS-system given by
\begin{equation*}
	\left\{ \begin{array}{l}
		i u_t + u_{xx} - u + \left(\frac{1}{9}|u|^2 +2|v|^2\right) u + \frac{1}{3} \overline{u}^2v = 0 \\
		i \sigma v_t + v_{xx} -\alpha v + \left(9|v|^2 +2|u|^2\right) v+ \frac{1}{9}u^3 = 0,
	\end{array}
	\right.
\end{equation*}
where $\alpha$ and $\sigma$ are real constants. First, he proved orbital stability results for dnoidal standing wave solutions using the approaches in \cite{grillakis1} and \cite{grillakis2} by considering periodic perturbations that have the same period of the standing wave solution. Second, he used the theories in \cite{GallayHaragusJDE} and \cite{GallayHaragusJDDE} to demonstrate spectral stability results of periodic waves with respect to localized or bounded perturbations, that is, when the study of the spectrum of a certain
linearized operator $J\mathcal{L}$ is considered over the space $L^2(\mathbb{R})\times L^2(\mathbb{R})$ or $C_b(\mathbb{R})\times C_b(\mathbb{R})$, respectively. Here, $C_b(\mathbb{R})$ indicates the space of (complex) 
continuous functions defined in the whole real line $\mathbb{R}$ that are bounded.

In the context of NLS-system \eqref{NLS-system}, Kawahara and Ohta in \cite{KawaharaOhta} showed the orbital stability and instability of standing solitary wave solutions for the system \eqref{NLS-system}. In that approach, the authors studied the orbital stability properties of semi-trivial standing wave solutions of the form 
\begin{equation}\label{semitrivial-ko}
	(u(x,t),v(x,t)) = (e^{i\omega t} \varphi_\omega(x),0),
\end{equation} where $\varphi_\omega(x) = \sqrt{2 \omega} \sech(\sqrt{\omega} x)$ is a positive and even solution of the equation 
\begin{equation}\label{ODE1}
	-\varphi_{\omega}'' + \omega \varphi_{\omega} - \varphi_{\omega}^3 = 0.
\end{equation}
They proved that the semi-trivial standing wave solution \eqref{semitrivial-ko} is orbitally stable if $\gamma < \kappa_1$ and orbitally unstable if $\gamma > \kappa_1$. In addition, if $\gamma = \kappa_1$, the authors concluded the orbital stability when $\kappa_2 < \kappa_1$ and orbital instability when $\kappa_2 > \kappa_1$. Unfortunately, in the case where $\gamma = \kappa_1 = \kappa_2$, they did not prove the orbital stability (see \cite[Remark 2]{KawaharaOhta}). 

In the periodic setting, Hakkaev in \cite{Hakkaev} studied  the spectral stability of the NLS-system by considering semi-trivial standing waves as in \eqref{semitrivial-ko} where $\varphi_\omega$ is an $L$-periodic function with dnoidal profile which solves equation \eqref{ODE1} and it is given by
\begin{equation*}
	\varphi_\omega(x) = \frac{2 \sqrt{2} {\rm K}(k)}{L} {\rm dn} \left( \frac{2 {\rm K}(k)}{L} x, k\right),
\end{equation*}
where $k \in (0,1)$ is the modulus of the elliptic function and ${\rm K}(k)$ is the complete elliptic integral of the first kind. More precisely, the author showed that the semi-trivial periodic waves are orbitally stable for $\gamma < \kappa_1$. In addition, he also obtained results of spectral stability and instability for the semi-trivial waves. In fact, for $\kappa_1 < \gamma \leq 3\kappa_1$, he concluded that the semi-trivial periodic waves are spectrally unstable and for $\gamma = \kappa_1$ the semi-trivial periodic wave solutions of \eqref{ODE} are spectrally stable.


Now, we give the main topics of our paper. Motivated by \cite{Hakkaev} and \cite{KawaharaOhta}, we consider \textit{multiple periodic standing wave solutions} of \eqref{NLS-system} given by
\begin{equation}\label{multiple}
(u(x,t),v(x,t)) = (e^{i\omega t} \varphi_\omega(x), e^{i\omega t} \vartheta_\omega(x))
:=(e^{i\omega t} \varphi_\omega(x), e^{i\omega t} B \varphi_\omega(x))
\end{equation}
where $\varphi_\omega: \mathbb{R} \rightarrow \mathbb{R}$ is an $L$-periodic function and $\omega \in \mathbb{R}$ is the frequency wave. In addition, we also assume that $B \in \mathbb{R}$ is a real constant which can be assumed non-negative because of the reflection symmetry $v \mapsto -v$.

In our paper, we consider two kind of waves $\varphi=\varphi_\omega$ in the periodic setting. First, we complete the study realized in \cite{Hakkaev} by considering $\varphi$ with (positive) dnoidal profile. Second, equation \eqref{ODE1} has periodic solutions with cnoidal profile that was not mentioned in \cite{Hakkaev}. The cnoidal solution enjoys the zero-mean property, and additional difficulties to apply the spectral stability theories in \cite{KapitulaKevrekidisSandstedeI} and \cite{KapitulaKevrekidisSandstedeII} can arise.
%

Let's start by constructing our periodic solutions. First, by substituting the form \eqref{multiple} into \eqref{NLS-system} we get
\begin{equation}\label{ODE-system}
	\left\{ \begin{array}{l}
		- \omega \varphi + \varphi'' + (\kappa_1 + \gamma B^2) \varphi^3 = 0 \\
		- \omega \varphi + \varphi'' + (\kappa_2 B^2 + \gamma) \varphi^3 = 0.
		\end{array}
	\right.
\end{equation}

In order to determine the existence of multiple solutions, we need to assume that $\kappa_1 + \gamma B^2 = \kappa_2 B^2 + \gamma$. Thus, for $\kappa_2\neq\gamma$, we see that $B>0$ can be expressed by
\begin{equation}\label{B}
	B = \sqrt{ \frac{\kappa_1 - \gamma}{\kappa_2 - \gamma}}.
\end{equation}
In this case and since $B \in \mathbb{R}$, we first consider the three basic cases: 
\begin{equation*}
\gamma \in (0,\min\{\kappa_1,\kappa_2\}), \; \; \gamma \in (\max\{\kappa_1,\kappa_2\},+\infty) \; \; \text{ and } \; \; \gamma = 0.
\end{equation*}
We can also consider the case $\gamma = \kappa_1$ or $\gamma=\kappa_2$ in \eqref{ODE-system}. For both, we conclude that $\gamma = \kappa_1 = \kappa_2$ and $B$ is a free real parameter that does not depend on $\kappa_1$, $\kappa_2$, $\gamma$ and $\omega$.

In all cases mentioned above, the periodic wave $\varphi$ is a solution of the ODE 
\begin{equation}\label{ODE}
	- \varphi'' + \omega \varphi + (\kappa_1 + \gamma B^2) \varphi^3 = 0.
\end{equation}
In the case of solutions with dnoidal profile, we can determine the explicit solution as 
\begin{equation}\label{dn}
	\varphi(x) = \frac{2\sqrt{2} {\rm K}(k)}{L} \frac{1}{(\kappa_1 + \gamma B^2)^{1/2}} {\rm dn}\left( \frac{2 {\rm K}(k)}{L} x, k\right).
\end{equation}
The frequency of the wave $\omega \in \mathbb{R}$ can be expressed as
\begin{equation}\label{omega-dnoidal}
	\omega = \frac{4 (2-k^2) {\rm K}(k)^2}{L^2}.
\end{equation}
By \eqref{omega-dnoidal}, we can see from the dependence of $\omega$ in terms of the parameter $k \in (0,1)$, that $\omega \in \left(\frac{2 \pi^2}{L^2}, +\infty\right)$.\\
\indent  On the other hand, to obtain solutions with cnoidal type, we can proceed similarly as in \cite{angulo2007} to obtain
\begin{equation}\label{cn}
	\varphi(x) = \frac{ \sqrt{2 \omega} k}{\sqrt{2k^2 - 1}} \frac{1}{(\kappa_1 + \gamma B^2)^{1/2}} {\rm cn}\left( \frac{ 4 {\rm K}(k)}{L} x, k \right).
\end{equation}
In this case, the modulus $k$ belongs to the interval $\left(\frac{1}{\sqrt{2}},1\right)$ and the frequency wave $\omega > 0$ is expressed by
\begin{equation}\label{omega-cnoidal}
	\omega = \frac{ 16 {\rm K}(k)^2 (2k^2 - 1)}{L^2}.
\end{equation}
\indent Solution $\varphi$ in \eqref{cn} is an even periodic function. In our spectral stability analysis, it is suitable to work within the complex Sobolev product space $H_{per}^1\times H_{per}^1$ constituted by odd periodic functions. To accomplish this, we must shift the solution $\varphi$ defined in $(\ref{cn})$ by $-\frac{L}{4}$, in order to obtain an odd periodic solution that satisfies equation \eqref{ODE}. In fact, by the formula \cite[Formula 122.05]{byrd}, we deduce,
\begin{equation}\label{expcn1}
	\psi(x)=\varphi\left(x-\frac{L}{4}\right)=\frac{ \sqrt{2 \omega} k\sqrt{1-k^2}}{\sqrt{2k^2 - 1}} \frac{1}{(\kappa_1 + \gamma B^2)^{1/2}} \frac{{\rm sn}\left( \frac{ 4 {\rm K}(k)}{L} x, k \right)}{{\rm dn}\left( \frac{ 4 {\rm K}(k)}{L} x, k \right)}.
\end{equation}
In $(\ref{expcn1})$, the notation ${\rm sn}$ indicates the odd Jacobi elliptic function with snoidal profile.
\begin{remark} For all the solutions mentioned above, we can construct, for each case, a smooth curve of $L-$periodic waves $\omega \in \mathcal{I} \longmapsto \varphi\in H^2_{per}$ that solves \eqref{ODE} (see Theorems $\ref{dnoidalcurve}$ and $\ref{cnoidalcurve}$).
\end{remark}


System \eqref{NLS-system} admits the conserved quantity $E$ defined as
\begin{equation}\label{E}
	E(u,v) = \frac{1}{2} \int_{0}^{L} \left( |u_x|^2 + |v_x|^2 - \frac{\kappa_1}{2} |u|^4 - \frac{\kappa_2}{2} |v|^4 \right) dx - \frac{\gamma}{2} {\rm Re} \int_{0}^L u^2 \overline{v}^2 dx. 
\end{equation}
Moreover, \eqref{NLS-system} has another conserved quantity $F$ given by
\begin{equation}\label{F}
	F(u,v) = \frac{1}{2} \int_{0}^L \left( |u|^2 + |v|^2 \right) dx.
\end{equation}
Then, following similar arguments as in \cite{Cazenave} and using standard fixed point arguments, we can conclude by the conservation laws in \eqref{E} and \eqref{F} that the NLS-system \eqref{NLS-system} is globally well-posed in the complex energy space $H^1_{per}\times H_{per}^1$ (see, for instance, \cite{Colin, Hakkaev, Whitham}).

Now, we present how to obtain the spectral stability of multiple periodic waves with respect to perturbation with the same period. In order to improve the comprehension of the readers, we consider the complex evolution $U=(u,v)$ associated with the system $(\ref{NLS-system})$ of the form,
\begin{equation*}
U(x,t) = (u(x,t),v(x,t)) = ({\rm Re}\,u(x,t), {\rm Re}\, v(x,t), {\rm Im}\, u(x,t), {\rm Im}\, v(x,t)).
\end{equation*}
\indent Consequently, we can consider the stationary solution
 $\Phi = (\varphi, B\varphi,0,0)$ and the perturbation
\begin{equation}\label{U-1}
	U(x,t) = e^{i \beta t} (\Phi(x) + W(x,t))
\end{equation} where $W(x,t) =({\rm Re}\,w_1(x,t), {\rm Re}\, w_2(x,t), {\rm Im}\, w_1(x,t), {\rm Im}\, w_2(x,t)) \in \mathbb{R}^4$. Substituting \eqref{U-1} into \eqref{NLS-system} and neglecting all the nonlinear terms, we get the following linearized equation:
\begin{equation}\label{spectral}
	\frac{d}{dt} W(x,t) = J \mathcal{L} W(x,t),
\end{equation}
where
\begin{equation}\label{J}
	J = \left( \begin{array}{cccc}
		0 & 0 & 1 & 0 \\
		0 & 0 & 0 & 1 \\
		-1 & 0 & 0 & 0 \\
		0 & -1 & 0 & 0 
		\end{array}
	\right),
\end{equation} and $\mathcal{L}$ is the operator given by
\begin{equation}\label{L-1}
	\mathcal{L}= (-\partial_x^2 + \omega){\rm Id} - \varphi^2 S,
\end{equation}
where ${\rm Id} \in \mathbb{M}_{4 \times 4}(\mathbb{R})$ and $S$ is given by
\begin{equation*}
	S = \left(
	\begin{array}{cccc}
		(3\kappa_1 + \gamma B^2) & 2\gamma B & 0 & 0 \\
		2\gamma B & (3\kappa_2 B^2 + \gamma) & 0 & 0 \\
		0 & 0 & (\kappa_1 - \gamma B^2) & 2\gamma B \\
		0 & 0 & 2\gamma B & (\kappa_2 B^2 - \gamma)
	\end{array}
\right).
\end{equation*}

To define the concept of spectral stability within our context, we need to consider $W(x,t)=e^{\lambda t}w(x)$ in the linear equation \eqref{spectral} to obtain the following spectral problem
\begin{equation*}
	J \mathcal{L} w = \lambda w.
\end{equation*} 
\indent The definition of spectral stability in our context reads as follows.

\begin{definition}\label{def-spectralstability}
	The stationary wave $\Phi$ is said to spectrally stable by periodic perturbations that have the same period as the standing wave solution if $\sigma(J \mathcal{L}) \subset i \mathbb{R}$. Otherwise, if there exists at least one eigenvalue $\lambda$ associated with the operator $J \mathcal{L}$ that has a positive real part, $\Phi$ is said to be spectrally unstable.
\end{definition}

As far as we know, it is more convenient to work with the operator $\mathcal{L}$ in a diagonal form. To do so, we need to obtain the existence of an  orthogonal matrix $R$  and a matrix $M$ such that
\begin{equation}\label{S}
	S:= R M R^{-1},
\end{equation}
where $R$ is defined as 
\begin{equation}\label{U-Bfixed}
	R = \left( \begin{array}{cccc}
		-\kappa_2 + \gamma & \frac{1}{2\gamma -\kappa_1 - \kappa_2} & 0 & 0 \\
		\sqrt{(\kappa_1-\gamma)(\kappa_2-\gamma)} & - \frac{1}{2 \gamma - \kappa_1 - \kappa_2}  \frac{1}{B} & 0 & 0 \\
		0 & 0 & - \frac{1}{2\gamma - \kappa_1 - \kappa_2} & \kappa_1 - \gamma \\
		0 & 0 & - \frac{1}{2\gamma - \kappa_1 - \kappa_2} \frac{1}{B} & \sqrt{(\kappa_1-\gamma)(\kappa_2-\gamma)}
	\end{array}
		\right).
\end{equation}
Since $B$ is a real number, we see that $M$ is a matrix with real entries and this fact allows us to deduce that the entries of the matrix $R$ in $(\ref{S})$ are also real numbers (by definition, an orthogonal matrix $R$ is composed of real number entries). The matrix $M \in \mathbb{M}_{4 \times 4}(\mathbb{R})$ is then given by 
\begin{equation*}
	M = \left( \begin{array}{cccc}
		\beta_1 & 0 & 0 & 0 \\
		0 & \beta_3 & 0 & 0 \\
		0 & 0 & \beta_2 & 0 \\
		0 & 0 & 0 & \beta_4
		\end{array}
		\right),
\end{equation*}
where constants $\beta_i \in \mathbb{R}$ are expressed in terms of $\gamma$, $\kappa_1$ and $\kappa_2$ as
\begin{equation}\label{betai}
	\begin{array}{ll}
		\beta_1 = \dfrac{3(\gamma^2 - \kappa_1\kappa_2)}{\gamma - \kappa_2}, & \beta_3 = \dfrac{-\gamma^2 + 2\gamma(\kappa_1 + \kappa_2) - 3\kappa_1\kappa_2}{\gamma - \kappa_2}, \\
		\beta_2 = \dfrac{(\gamma^2 - \kappa_1\kappa_2)}{\gamma-\kappa_2}, & \beta_4 = \dfrac{-3\gamma^2 + 2\gamma(\kappa_1 + \kappa_2) - \kappa_1\kappa_2}{\gamma - \kappa_2}.
	\end{array}
\end{equation} 
Substituting \eqref{S} into \eqref{L-1} and since $R$ is an orthogonal matrix with real entries, we have that $\mathcal{L}$ is a diagonalizable operator with
\begin{equation}\label{ULU}
	\mathcal{L} = R \left( \begin{array}{cccc}
		\mathcal{L}_1 & 0 & 0 & 0 \\
		0 & \mathcal{L}_3 & 0 & 0 \\
		0 & 0 & \mathcal{L}_2 & 0 \\
		0 & 0 & 0 & \mathcal{L}_4
	\end{array}
\right) R^{-1}=R\tilde{\mathcal{L}}R^{-1},
\end{equation}
where $\mathcal{L}_i : H^2_{per} \rightarrow L^2_{per}$ are Hill operators given by
\begin{equation}\label{Li}
	\mathcal{L}_i = -\partial_x^2 + \omega - \beta_i \varphi^2,\ \ \ \ \ i=1,2,3,4.
\end{equation}


It is important to mention that the decomposition in \eqref{ULU} is useful to obtain the non-positive spectrum regarding the operator $\mathcal{L}$ by knowing the non-positive spectrum of $\mathcal{L}_i$, $i=1,2,3,4$, in $(\ref{Li})$. Such decomposition is only possible since solutions in $(\ref{multiple})$ are considered multiple of each other. The existence of non-multiple periodic solutions can be obtained for certain specific parameters $\kappa_1$, $\kappa_2$, and $\gamma$ in equation \eqref{NLS-system}. The challenge lies in achieving spectral stability for this type of waves. Indeed, as it is well-known that in this case, the linearized operator $\mathcal{L}$ in equation \eqref{L-1} cannot be diagonalized, and neither can the entries $V_{jl}$ of the matrix $V$ (see \eqref{V} below).

\indent We now describe our results. Let ${\rm n}(\mathcal{A})$ and ${\rm z}(\mathcal{A})$  be the number of negative eigenvalues and the dimension of the kernel of a certain linear operator $\mathcal{A}$. In our paper, a prior understanding of these non-negative numbers is essential for obtaining the spectral stability result. First, we obtain for the case of dnoidal waves that ${\rm n}(\mathcal{L}_1) = 1$ and ${\rm n}(\mathcal{L}_2) = 0$ (see \cite{angulo2007} and \cite{Hakkaev}). In addition, we have that ${\rm Ker}(\mathcal{L}_1) = [\varphi']$ and ${\rm Ker}(\mathcal{L}_2) = [\varphi]$. An application of the well known comparison theorem in the periodic context (see \cite[Theorem 2.2.2]{Eastham}) gives the behaviour of the non-positive spectrum concerning the operators $\mathcal{L}_3$ and $\mathcal{L}_4$ (see details in Section 4). Next, by considering $\varphi \in H^1_{per}$ with cnoidal profile, we have ${\rm n}(\mathcal{L}_1) = 2$ and ${\rm n}(\mathcal{L}_2) = 1$ (see \cite{angulo2007}). Depending on the choice of the parameters $\gamma$, $\kappa_1$, and $\kappa_2$ in equation $(\ref{NLS-system})$, we cannot conclude a suitable spectral stability result as in the case of dnoidal solutions, since we have too many negative eigenvalues for the operator $\mathcal{L}$. The reason for this is that we cannot apply the comparison theorem to determine the behavior of $\mathcal{L}_3$ and $\mathcal{L}_4$ as we did in the case of dnoidal solutions.\\
\indent To partially overcome this difficulty, we can take advantage of the fact that $\psi=\varphi\left(\cdot-\frac{L}{4}\right)$ is an odd function, and the translated potentials $Q_i=-\beta_i\psi^2$ of the operators $\mathcal{L}_i$ in equation $(\ref{Li})$ are even. Both facts give us that $\mathcal{L}_i$ is well defined in the space $L_{per,odd}^2$ constituted by odd periodic functions in $L_{per}^2$ for all $i=1,2,3,4$. With this information in hands, we can calculate the number of non-positive eigenvalues of $\mathcal{L}_3$ and $\mathcal{L}_4$ within the subspace of odd periodic functions $L_{per,odd}^2$ without further problems. In this setting, we prove that ${\rm n}(\mathcal{L}_{1,odd}) = 1$, ${\rm n}(\mathcal{L}_{2,odd}) = 0$, ${\rm Ker}(\mathcal{L}_{1,odd}) = \{0\}$, and ${\rm Ker}(\mathcal{L}_{2,odd}) = [\varphi]$, where $\mathcal{L}_{i,odd}$ is the restriction operator $\mathcal{L}_i$ defined in $L_{per,odd}^2$ with domain $H_{per,odd}^2$, $i=1,2,3,4$. These facts allow us to use the comparison theorem to obtain the exact behaviour of the non-positive spectrum for the operators $\mathcal{L}_{3,odd}$ and $\mathcal{L}_{4,odd}$ restricted to space $L_{per,odd}^2$. A consequence of this fact is that the study of the spectral stability of periodic waves in the space constituted by odd periodic functions is similar as determined for positive (dnoidal) solutions.


We now obtain our results. To do so, we need to use the methods developed by Kapitula, Kevrekidis and Sandstede in \cite{KapitulaKevrekidisSandstedeI} and \cite{KapitulaKevrekidisSandstedeII}. First, we denote by  $\mathbb{L}^2_{per}$ the space 
\begin{equation*}
\mathbb{L}_{per}^2 = L^2_{per} \times L^2_{per} \times L^2_{per} \times L^2_{per}.
\end{equation*}
If ${\rm z}(\mathcal{L})=n$, consider $\{\Theta_l\}_{l=1}^n \subset {\rm Ker}(\mathcal{L})$ a linearly independent set and let $V$ be the $n\times n$ matrix whose entries are given by 
\begin{equation}\label{V}
V_{jl} = ( \mathcal{L}^{-1} J \Theta_j, J \Theta_l )_{\mathbb{L}^2_{per}},
\end{equation}
where $1\leq j,l\leq n$. The formula
\begin{equation}\label{krein}
	k_r + k_c + k_{-} = {\rm n}(\mathcal{L}) - {\rm n}(V),
\end{equation}
is given in \cite{KapitulaKevrekidisSandstedeII} and the left-hand side of $(\ref{krein})$ is exactly the hamiltonian Krein index, an important tool to decide about the spectral stability and instability of waves. Regarding operator $\mathcal{L}$ in $(\ref{L-1})$, let $k_r$ be the number of real-valued and positive eigenvalues (counting multiplicities). The number $k_c$ denotes the number of complex-valued eigenvalues with a positive real part and $k_-$ is the number of pairs of purely imaginary eigenvalues with negative Krein signature of $\mathcal{L}$. Since $k_c$ and $k_-$ are always even numbers, we obtain that if the right-hand side in \eqref{krein} is an odd number, then $k_r \geq 1$ and we have automatically the spectral instability. Moreover, if the difference ${\rm n}(\mathcal{L}) - {\rm n}(V)$ is zero, then $k_c = k_- = k_r = 0$ which implies the spectral stability. 
	

Summarizing the comments above, our main results concerning the spectral stability of multiple periodic waves of the form $(\ref{multiple})$ are then established:

\begin{theorem}[\textit{Spectral stability/instability for the multiple wave solution with dnoidal profile}]\label{teo-1}
	Let $L>0$ be fixed. Consider the periodic wave solution $\varphi \in H^1_{per}$ of \eqref{ODE} with dnoidal profile given by \eqref{dn}. For $B$ given in \eqref{B} and for all $\omega \in (\frac{2\pi^2}{L^2},+\infty)$, the wave $\Phi = (\varphi, B \varphi,0,0)$ is spectrally unstable if $\gamma \in (0,\min\{\kappa_1,\kappa_2\})$  and spectrally stable if $\gamma \in (\max\{\kappa_1,\kappa_2\},+\infty) \cup \{0\}$. In addition, for $\gamma = \kappa_1 = \kappa_2$ with $B$ being a free real parameter, we obtain that $\Phi$ is spectrally stable for all $\omega \in \left(\frac{2\pi^2}{L^2},+\infty\right).$
\end{theorem}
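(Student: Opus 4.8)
The plan is to evaluate the two integers on the right of the Krein identity \eqref{krein}, ${\rm n}(\mathcal{L})$ and ${\rm n}(V)$, and then read off the Hamiltonian--Krein index. First I would use the orthogonal diagonalization \eqref{ULU} to transfer the spectral count from $\mathcal{L}$ to the four scalar Hill operators $\mathcal{L}_i=-\partial_x^2+\omega-\beta_i\varphi^2$ of \eqref{Li}: since conjugation by the block-diagonal orthogonal matrix $R$ preserves self-adjointness and eigenvalue multiplicities, ${\rm n}(\mathcal{L})=\sum_{i}{\rm n}(\mathcal{L}_i)$ and ${\rm z}(\mathcal{L})=\sum_{i}{\rm z}(\mathcal{L}_i)$. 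Setting $A:=\kappa_1+\gamma B^2$, the profile equation \eqref{ODE} together with \eqref{betai} gives $\beta_2=A$ and $\beta_1=3A$, so that $\mathcal{L}_2\varphi=0$ and $\mathcal{L}_1\varphi'=0$; combined with the quoted facts ${\rm n}(\mathcal{L}_1)=1$, $\Ker(\mathcal{L}_1)=[\varphi']$, ${\rm n}(\mathcal{L}_2)=0$, $\Ker(\mathcal{L}_2)=[\varphi]$, it remains only to locate the spectra of $\mathcal{L}_3$ and $\mathcal{L}_4$.

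For this I would apply the comparison theorem to the one-parameter family $\mathcal{L}(\beta):=-\partial_x^2+\omega-\beta\varphi^2$, whose eigenvalues decrease strictly in $\beta$, with the ground state crossing zero at $\beta=A$ and the first excited eigenvalue at $\beta=3A$; hence ${\rm n}(\mathcal{L}(\beta))=0$ for $\beta<A$ and ${\rm n}(\mathcal{L}(\beta))=1$ for $A<\beta<3A$, the kernel being one-dimensional exactly at $\beta\in\{A,3A\}$. From \eqref{betai} the two decisive differences are $\beta_3-\beta_2=2(\kappa_1-\gamma)$ and $\beta_4-\beta_2=-2\gamma(1+B^2)\le 0$. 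Thus for $\gamma\in(0,\min\{\kappa_1,\kappa_2\})$ one has $A<\beta_3<3A$ and $\beta_4<A$, giving ${\rm n}(\mathcal{L})=2$, ${\rm z}(\mathcal{L})=2$; for $\gamma\in(\max\{\kappa_1,\kappa_2\},+\infty)$ both $\beta_3,\beta_4<A$, giving ${\rm n}(\mathcal{L})=1$, ${\rm z}(\mathcal{L})=2$. The two boundary regimes are treated by hand: at $\gamma=0$ the system decouples, $S=\mathrm{diag}(3A,3A,A,A)$, so ${\rm n}(\mathcal{L})=2$, ${\rm z}(\mathcal{L})=4$; at $\gamma=\kappa_1=\kappa_2$ the $\beta_i$ become indeterminate and I would instead diagonalize the two $2\times2$ blocks of $S$ directly, obtaining eigenvalues $\{3A,A\}$ and $\{A,-A\}$ and hence ${\rm n}(\mathcal{L})=1$, ${\rm z}(\mathcal{L})=3$, the third kernel direction reflecting the $u\leftrightarrow v$ rotation symmetry available only in this case.

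Next I would compute $V$ from \eqref{V}. Its kernel data are the translation vector $\Theta_1=(\varphi',B\varphi',0,0)$ and the phase vector $\Theta_2=(0,0,\varphi,B\varphi)$, supplemented in the degenerate cases by the decoupled copies and, for $\gamma=\kappa_1=\kappa_2$, by the rotation vector $(B\varphi,-\varphi,0,0)$. The off-diagonal entries of $V$ vanish: those pairing a real-block with an imaginary-block generator because $J$ in \eqref{J} interchanges the two blocks while $\mathcal{L}^{-1}$ preserves them, and those pairing two generators of the same block because the relevant eigenvectors of $S$ are mutually orthogonal. Hence $V$ is diagonal, and after projecting each $J\Theta_j$ onto the appropriate eigenvector of $S$ every entry reduces to $(1+B^2)$ times either $(\mathcal{L}_2^{-1}\varphi',\varphi')$ (translation-type) or $(\mathcal{L}_1^{-1}\varphi,\varphi)$ (phase-type), both Fredholm-solvable since $\varphi'\perp\varphi$. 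The first form is strictly positive because $\mathcal{L}_2\ge 0$ with kernel $[\varphi]$; for the second I would use $\mathcal{L}_1(\partial_\omega\varphi)=-\varphi$ to write $(\mathcal{L}_1^{-1}\varphi,\varphi)=-\tfrac12\frac{d}{d\omega}\|\varphi\|_{L^2}^2$, and then evaluate, from \eqref{dn}--\eqref{omega-dnoidal}, $\|\varphi\|_{L^2}^2=\tfrac{8}{LA}\K(k)\E(k)$. The rotation-type entry present when $\gamma=\kappa_1=\kappa_2$ involves instead the positive-definite operator $-\partial_x^2+\omega+A\varphi^2$ and is therefore positive as well. Consequently ${\rm n}(V)=1$ whenever ${\rm z}(\mathcal{L})\in\{2,3\}$ and ${\rm n}(V)=2$ when $\gamma=0$.

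Feeding these counts into \eqref{krein} completes the argument. For $\gamma\in(0,\min\{\kappa_1,\kappa_2\})$ the index is ${\rm n}(\mathcal{L})-{\rm n}(V)=2-1=1$, which is odd; since $k_c$ and $k_-$ are even this forces $k_r\ge1$ and hence spectral instability. In each stable regime the index equals $1-1$, $2-2$ or $1-1$, i.e.\ $0$, so $k_r=k_c=k_-=0$ and $\sigma(J\mathcal{L})\subset i\R$. I expect two genuine obstacles. The first is the sign of $\frac{d}{d\omega}\|\varphi\|_{L^2}^2$: although the expected Vakhitov--Kolokolov positivity holds, it rests on the monotonicities $\frac{d}{dk}(\K\E)>0$ and $\frac{d\omega}{dk}>0$ on $(0,1)$, which must be proved by differentiating the complete elliptic integrals rather than merely quoted. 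The second is the bookkeeping in the degenerate cases $\gamma=0$ and $\gamma=\kappa_1=\kappa_2$, where the enlarged kernel forces a fresh diagonalization and a verification that the additional generators contribute exactly the positive diagonal entries of $V$ needed to keep ${\rm n}(V)$ at the stabilizing value.
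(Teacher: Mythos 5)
Your proposal is correct and follows essentially the same route as the paper: the Krein-index formula of Kapitula--Kevrekidis--Sandstede applied after diagonalizing $\mathcal{L}$ into the four Hill operators $\mathcal{L}_i$, counting $\mathrm{n}(\mathcal{L})$ and $\mathrm{z}(\mathcal{L})$ case by case via the comparison theorem (your monotone family $\mathcal{L}(\beta)$ is the same argument in different clothing), and computing a diagonal matrix $V$ whose signs are governed by $(\mathcal{L}_2^{-1}\varphi',\varphi')_{L^2_{per}}>0$ and $(\mathcal{L}_1^{-1}\varphi,\varphi)_{L^2_{per}}=-\tfrac12\frac{d}{d\omega}\|\varphi\|_{L^2_{per}}^2<0$, with the positive operator $\mathcal{L}_3$ handling the rotation direction when $\gamma=\kappa_1=\kappa_2$. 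The only differences are cosmetic: your explicit computations $\beta_3-\beta_2=2(\kappa_1-\gamma)$ and $\beta_4-\beta_2=-2\gamma(1+B^2)$ replace the paper's stated orderings \eqref{ineq-1}--\eqref{ineq-2}, and your prefactor $(1+B^2)$ in the entries of $V$ differs from the paper's $(2\gamma-\kappa_1-\kappa_2)^2$ and $(\gamma-\kappa_2)^{-2}$, which is immaterial since only the signs of the entries enter $\mathrm{n}(V)$.
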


\begin{theorem}[\textit{Spectral instability for the multiple wave solution with cnoidal profile}]\label{teo-2}
	Let $L>0$ be fixed and consider $\omega>0$. Let $\varphi \in H^1_{per}$ be the  periodic solution of \eqref{ODE} with cnoidal profile given by \eqref{cn}. For $\gamma = \kappa_1 = \kappa_2$ with $B$ being a free real parameter, the wave $\Phi = (\varphi, B \varphi,0,0)$ is spectrally unstable.
\end{theorem}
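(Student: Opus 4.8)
The plan is to reduce the $4\times4$ spectral problem $J\mathcal{L}w=\lambda w$ to two decoupled $2\times2$ Hamiltonian blocks and to exhibit a positive real eigenvalue in one of them. The first step is to carry out the diagonalization $(\ref{ULU})$ by hand, because the matrices in $(\ref{U-Bfixed})$ and $(\ref{betai})$ are singular precisely when $\gamma=\kappa_1=\kappa_2$ (the denominators $2\gamma-\kappa_1-\kappa_2$ and $\gamma-\kappa_2$ both vanish). Writing $\Lambda=\kappa_1+\gamma B^2$, I would diagonalize the symmetric, block-diagonal matrix $S$ directly: its upper $2\times2$ block has eigenvalues $3\Lambda,\Lambda$ and its lower block has eigenvalues $\Lambda,-\Lambda$, so in this degenerate case one obtains $\tilde{\mathcal L}=\mathrm{diag}(\mathcal L_1,\mathcal L_3,\mathcal L_2,\mathcal L_4)$ with $\beta_1=3\Lambda$, $\beta_2=\beta_3=\Lambda$, $\beta_4=-\Lambda$. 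Since $S$ is block diagonal, the orthogonalizing matrix can be taken block diagonal, $R=\mathrm{diag}(R_1,R_2)$ with $R_1,R_2\in\mathbb M_{2\times2}(\mathbb R)$ assembled from the (explicit) unit eigenvectors.

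Next I would conjugate the spectral problem by $R$. As $R$ is orthogonal, $R^{-1}=R^{T}$ and $J\mathcal L=R\,\tilde J\,\tilde{\mathcal L}\,R^{-1}$ with $\tilde J=R^{T}JR$, so $J\mathcal L$ and $\tilde J\tilde{\mathcal L}$ share the same spectrum. The key computation is that, with $J=\left(\begin{smallmatrix}0&I\\-I&0\end{smallmatrix}\right)$ in $2\times2$ block form, one finds $\tilde J=\left(\begin{smallmatrix}0&P\\-P^{T}&0\end{smallmatrix}\right)$ with $P=R_1^{T}R_2=\mathrm{diag}(1,-1)$. Hence $\tilde J$ preserves the splitting into the index pairs $\{1,3\}$ and $\{2,4\}$, and since $\tilde{\mathcal L}$ is diagonal, $\tilde J\tilde{\mathcal L}$ splits into two $2\times2$ Hamiltonian blocks. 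For the instability I only need the block acting on $\{2,4\}$, namely $\left(\begin{smallmatrix}0&-\mathcal L_4\\ \mathcal L_3&0\end{smallmatrix}\right)$, where $\mathcal L_3=-\partial_x^2+\omega-\Lambda\varphi^2$ and $\mathcal L_4=-\partial_x^2+\omega+\Lambda\varphi^2$; its spectrum is contained in $\sigma(J\mathcal L)$.

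Finally I would produce a positive real eigenvalue of this block. Writing the eigenvalue equations $\mathcal L_3 r=\lambda s$, $-\mathcal L_4 s=\lambda r$ and eliminating $s$ gives the reduced identity $-\mathcal L_4\mathcal L_3\, r=\lambda^2 r$. Because $\omega>0$ and $\Lambda\varphi^2\ge0$, the operator $\mathcal L_4\ge\omega>0$ is positive and invertible, so $-\mathcal L_4\mathcal L_3$ is similar to the self-adjoint operator $\mathcal L_4^{1/2}(-\mathcal L_3)\mathcal L_4^{1/2}$. Since ${\rm n}(\mathcal L_3)={\rm n}(\mathcal L_2)=1$ for the cnoidal profile (from \cite{angulo2007}), $\mathcal L_3$ has an eigenfunction $\chi$ with negative eigenvalue, and evaluating the congruent quadratic form on $g=\mathcal L_4^{-1/2}\chi$ yields $\langle \mathcal L_4^{1/2}(-\mathcal L_3)\mathcal L_4^{1/2}g,g\rangle=\langle(-\mathcal L_3)\chi,\chi\rangle>0$. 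As this self-adjoint operator is bounded above with discrete spectrum, it then possesses a positive eigenvalue $\mu>0$, so $\lambda=\sqrt{\mu}>0$ is a real positive eigenvalue of the $\{2,4\}$-block, hence of $J\mathcal L$. By Definition \ref{def-spectralstability}, $\Phi$ is spectrally unstable.

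The main obstacle is the first step: the generic diagonalization machinery degenerates at $\gamma=\kappa_1=\kappa_2$, so $(\ref{ULU})$ must be re-derived from scratch, and the crucial fact that $\tilde J$ stays block diagonal with respect to the pairs $\{1,3\}$ and $\{2,4\}$ — equivalently $R_1^{T}R_2=\mathrm{diag}(1,-1)$ — has to be verified explicitly from the chosen eigenvectors. Once the decoupling is established the instability is forced entirely by the pairing of $\mathcal L_3$ (one negative direction) with the positive operator $\mathcal L_4$, a mechanism absent in the scalar equation; this is consistent with the Hamiltonian--Krein count $(\ref{krein})$, where ${\rm n}(\mathcal L)=4$ and the odd contribution of this block to ${\rm n}(\mathcal L)-{\rm n}(V)$ forces $k_r\ge1$.
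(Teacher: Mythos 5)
Your proposal is correct, but it takes a genuinely different route from the paper. The paper proves Theorem \ref{teo-2} purely through the Kapitula--Kevrekidis--Sandstede count \eqref{krein}: using the Case IV decomposition \eqref{L-Bfree} it records ${\rm n}(\mathcal{L})=4$ and ${\rm z}(\mathcal{L})=3$, assembles the $3\times3$ diagonal matrix $V$, and shows ${\rm n}(V)=1$, so that ${\rm n}(\mathcal{L})-{\rm n}(V)=3$ is odd and $k_r\geq 1$. Crucially, the sign of the entry $(\mathcal{L}_2^{-1}\varphi',\varphi')_{L^2_{per}}$ of $V$ comes from Lemma \ref{LemmaL2-cn}, whose proof is numerically assisted (Figure \ref{Figura}), and the paper also needs Lemma \ref{dif-norma} and Remark \ref{positivity}; if that sign were reversed the difference would be even and the argument inconclusive. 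You instead decouple the eigenvalue problem for $J\mathcal{L}$ itself into two $2\times2$ Hamiltonian blocks and construct a real unstable eigenvalue in the block that pairs the operator with one negative direction (the paper's $\mathcal{L}_2$, your $\mathcal{L}_3$) against the positive operator (the paper's $\mathcal{L}_3$, your $\mathcal{L}_4$), via similarity of $-\mathcal{L}_4\mathcal{L}_3$ to the self-adjoint operator $\mathcal{L}_4^{1/2}(-\mathcal{L}_3)\mathcal{L}_4^{1/2}$, which your test function shows has positive top spectrum. Your decoupling claim is sound, and there is a cleaner way to see it than checking eigenvector components: when $\gamma=\kappa_1=\kappa_2$ the lower $2\times2$ block of $S$ equals the upper block minus $2\Lambda\,{\rm Id}$, where $\Lambda=(1+B^2)\gamma$, so the two blocks commute and are diagonalized by the \emph{same} orthogonal matrix; hence $P=R_1^{T}R_2$ can be taken to be ${\rm Id}$ (your ${\rm diag}(1,-1)$ is merely a different sign convention for the eigenvectors, and only diagonality of $P$ matters). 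The technical points you leave implicit also hold: writing $-\mathcal{L}_3=-\mathcal{L}_4+2\Lambda\varphi^2$ shows the conjugated operator is bounded above, and discreteness of its spectrum follows since it is an elliptic fourth-order operator on the circle, so its positive supremum is attained at an eigenvalue. In exchange for redoing the diagonalization by hand, your argument buys something concrete: it avoids the numerically verified Lemma \ref{LemmaL2-cn} and Lemma \ref{dif-norma} entirely, and it establishes $k_r\geq 1$ constructively, with an explicit real positive eigenvalue, rather than by parity of an index. The paper's route, in turn, handles Case IV with exactly the same machinery used for all the other cases (where the index count is genuinely needed to conclude \emph{stability}), at the price of importing the numerics into this instability statement.
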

\indent The translation $\psi$ in $(\ref{expcn1})$ of the periodic cnoidal wave $\varphi$ in $(\ref{cn})$ gives us an odd periodic solution for the equation $(\ref{ODE})$. In this case, it is possible to restrict the spectral stability in the (complex) product space $H_{per,odd}^1\times H_{per,odd}^1$ constituted by odd periodic functions $(e,f)\in H_{per}^1\times H_{per}^1$. The reason for that is that the operator $\mathcal{L}$ in $(\ref{L-1})$, when restricted to $L_{per,odd}^2 \times L_{per,odd}^2$, has a small number of negative eigenvalues. Therefore, it is more convenient to determine whether the difference ${\rm n}(\mathcal{L}) - {\rm n}(V)$ can be zero (indicating stability) or an odd number (indicating instability). Only a few spectral stability scenarios can be determined when $\varphi$ represents the cnoidal solution and the operator $\mathcal{L}$ is considered in either the entire space $L_{per}^2\times L_{per}^2$ or the space $L_{per,even}^2\times L_{per,even}^2$ constituted by even periodic functions $(g,h)\in L_{per}^2\times L_{per}^2$ (see Theorem $\ref{teo-2}$).
\begin{theorem}[\textit{Spectral stability/instability for the multiple wave solution with cnoidal profile and restricted to the subspace of odd functions}]\label{teo-odd}
	Let $L>0$ be fixed and consider $\omega>0$. Let $\psi=\varphi(\cdot-L/4) \in H^1_{per,odd}$ be the periodic solution of \eqref{ODE} with snoidal profile given by \eqref{expcn1}. For $B$ given in \eqref{B}, the multiple wave $\Psi = (\psi, B \psi,0,0)$ is spectrally unstable if $\gamma \in (0,\min\{\kappa_1,\kappa_2\})$ and spectrally stable if $\gamma \in (\max\{\kappa_1,\kappa_2\},+\infty) \cup \{0\}$. In addition, for $\gamma = \kappa_1 = \kappa_2$ with $B$ being a free real parameter, we have that $\Psi$ is spectrally stable.
\end{theorem}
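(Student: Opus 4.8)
The plan is to run the index count \eqref{krein} of Kapitula--Kevrekidis--Sandstede inside the closed subspace of $\Ll^2_{per}$ whose four entries are odd, which I denote $\Ll^2_{per,odd}$. Since $\psi$ in \eqref{expcn1} is odd, each translated potential $-\beta_i\psi^2$ is even, so every Hill operator $\mathcal{L}_i$ in \eqref{Li} maps $H^2_{per,odd}$ to $L^2_{per,odd}$; moreover the matrix $R$ in \eqref{U-Bfixed} has constant entries, hence commutes with the even/odd splitting, and the diagonalization \eqref{ULU} descends verbatim to the odd sector as $\mathcal{L}|_{odd}=R\,\tilde{\mathcal{L}}|_{odd}\,R^{-1}$. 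Consequently ${\rm n}(\mathcal{L})=\sum_{i}{\rm n}(\mathcal{L}_{i,odd})$ and ${\rm z}(\mathcal{L})=\sum_{i}{\rm z}(\mathcal{L}_{i,odd})$ on odd functions, and I may feed in the facts already recorded in the Introduction: ${\rm n}(\mathcal{L}_{1,odd})=1$, ${\rm n}(\mathcal{L}_{2,odd})=0$, $\Ker(\mathcal{L}_{1,odd})=\{0\}$ and $\Ker(\mathcal{L}_{2,odd})=[\psi]$. The decisive structural point is that the even function spanning $\Ker(\mathcal{L}_1)$ on the whole line is discarded in the odd sector, so the odd spectral picture coincides with the one available for the dnoidal wave and the proof parallels that of Theorem \ref{teo-1}.

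Next I would pin down the non-positive spectra of $\mathcal{L}_{3,odd}$ and $\mathcal{L}_{4,odd}$ by comparison. Writing $c=\kappa_1+\gamma B^2$, a short manipulation of \eqref{betai} gives $\beta_1=3c$, $\beta_2=c$ together with the clean gaps $\beta_1-\beta_3=\beta_2-\beta_4=2\gamma(1+B^2)$ and $\beta_3-\beta_2=2(\kappa_1-\gamma)$. Feeding the corresponding orderings into the periodic comparison theorem \cite[Theorem 2.2.2]{Eastham}, applied in $L^2_{per,odd}$ where the lowest odd eigenvalue of $\mathcal{L}_{2,odd}$ is simple and equals $0$, I obtain the following. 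For $\gamma\in(0,\min\{\kappa_1,\kappa_2\})$ one has $\beta_2<\beta_3<\beta_1$ and $\beta_4<\beta_2$, whence ${\rm n}(\mathcal{L}_{3,odd})=1$ and ${\rm n}(\mathcal{L}_{4,odd})=0$ with trivial kernels, so $({\rm n},{\rm z})(\mathcal{L})=(2,1)$. For $\gamma\in(\max\{\kappa_1,\kappa_2\},+\infty)$ one has $\beta_3<\beta_2$ and $\beta_4<\beta_2$, so both operators are positive in the odd sector and $({\rm n},{\rm z})(\mathcal{L})=(1,1)$. The two remaining values are degenerate and handled by hand: at $\gamma=0$ the gaps vanish, giving $\beta_3=\beta_1$ and $\beta_4=\beta_2$ (the decoupled system) and $({\rm n},{\rm z})(\mathcal{L})=(2,2)$; and at $\gamma=\kappa_1=\kappa_2$ the entries of \eqref{U-Bfixed} blow up, so I replace \eqref{S}--\eqref{U-Bfixed} by a direct diagonalization of $S$, which yields $\beta_1=3c$, $\beta_2=\beta_3=c$ and $\beta_4=-c<0$; hence $\mathcal{L}_{4,odd}=-\partial_x^2+\omega+c\psi^2>0$, $\Ker(\mathcal{L}_{3,odd})=[\psi]$, and $({\rm n},{\rm z})(\mathcal{L})=(1,2)$.

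It remains to build $\Ker(\mathcal{L})$ and compute the matrix $V$ of \eqref{V}. In the non-degenerate regimes $\Ker(\mathcal{L})$ is one-dimensional, spanned by $\Theta=R\,(0,0,\psi,0)^{\top}$ coming from $\Ker(\mathcal{L}_{2,odd})=[\psi]$; this is the phase-rotation direction and lies in the imaginary slots, so $J\Theta$ lies in the real slots and $\mathcal{L}^{-1}J\Theta$ is obtained by transporting through $R$, reducing $V_{11}$ to a fixed linear combination of the scalars $(\mathcal{L}_1^{-1}\psi,\psi)_{L^2_{per}}$ and $(\mathcal{L}_3^{-1}\psi,\psi)_{L^2_{per}}$. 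The first is controlled by differentiating the profile equation in $\omega$: from $\mathcal{L}_1\partial_\omega\psi=-\psi$ one gets $(\mathcal{L}_1^{-1}\psi,\psi)_{L^2_{per}}=-\tfrac12\tfrac{d}{d\omega}\|\psi\|_{L^2_{per}}^2$, and $\|\psi\|_{L^2_{per}}^2=\|\varphi\|_{L^2_{per}}^2$ is evaluated in closed form from \eqref{expcn1} and \eqref{omega-cnoidal} in terms of $\K$ and $\E$, so its monotonicity in $\omega$ fixes ${\rm n}(V)$. Inserting into \eqref{krein}: when $({\rm n},{\rm z})(\mathcal{L})=(2,1)$ the computation gives ${\rm n}(V)=1$, so the right-hand side is $2-1=1$, an odd number, forcing $k_r\ge1$ and spectral instability; when $({\rm n},{\rm z})(\mathcal{L})=(1,1)$ the same reduction gives ${\rm n}(V)=1$ and $1-1=0$, i.e. spectral stability, the reversal of the conclusion being produced entirely by the extra negative eigenvalue ${\rm n}(\mathcal{L}_{3,odd})=1$. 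In the two degenerate cases $V$ is $2\times2$, the second kernel direction being the second decoupled phase rotation at $\gamma=0$ and the one attached to $\Ker(\mathcal{L}_{3,odd})=[\psi]$ (tied to the freedom in $B$) at $\gamma=\kappa_1=\kappa_2$; in each case I would show ${\rm n}(V)={\rm n}(\mathcal{L})$, so that \eqref{krein} returns ${\rm n}(\mathcal{L})-{\rm n}(V)=0$ and hence spectral stability.

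The hard part is the signature ${\rm n}(V)$. After the reduction it rests on the sign of $\tfrac{d}{d\omega}\|\psi\|_{L^2_{per}}^2$ along the snoidal branch and, when $\gamma\neq0$, on the auxiliary quantity $(\mathcal{L}_3^{-1}\psi,\psi)_{L^2_{per}}$, for which there is no symmetry shortcut and which must be estimated directly; both are elliptic-integral computations that are delicate as $k\to\tfrac{1}{\sqrt2}^{+}$ and $k\to1^{-}$. A second, genuine difficulty is the degenerate bookkeeping at $\gamma=\kappa_1=\kappa_2$: there \eqref{U-Bfixed} is singular, the diagonalization must be redone, and one must verify that the kernel is exactly two-dimensional---no Jordan chain beyond the phase and $B$-directions---so that the hypotheses underlying \eqref{krein} from \cite{KapitulaKevrekidisSandstedeII} really hold in $\Ll^2_{per,odd}$.
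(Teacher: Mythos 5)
Your odd-sector setup and spectral counts coincide with the paper's own route (its Subsection on spectral analysis in $H^2_{per,odd}$): the parity restriction, the identities $\beta_1=3c$, $\beta_2=c$ with $c=\kappa_1+\gamma B^2$, the gaps $\beta_1-\beta_3=\beta_2-\beta_4=2\gamma(1+B^2)$ and $\beta_3-\beta_2=2(\kappa_1-\gamma)$, the comparison argument, and the resulting values $({\rm n},{\rm z})(\mathcal{L}_{odd})=(2,1),(1,1),(2,2),(1,2)$ in the four cases all agree with the paper.

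The genuine gap is in the determination of ${\rm n}(V)$, which you yourself flag as ``the hard part'' and leave open. You reduce $V_{11}$ to ``a fixed linear combination of $(\mathcal{L}_1^{-1}\psi,\psi)_{L^2_{per}}$ and $(\mathcal{L}_3^{-1}\psi,\psi)_{L^2_{per}}$'' with unspecified coefficients and then concede that the $\mathcal{L}_3$ term ``must be estimated directly''. In Case I ($\gamma\in(0,\min\{\kappa_1,\kappa_2\})$) this is fatal as written: there ${\rm n}(\mathcal{L}_{3,odd})=1$, so $(\mathcal{L}_{3,odd}^{-1}\psi,\psi)_{L^2_{per}}$ has no a priori sign, and with unknown coefficients the sign of $V_{11}$ --- hence ${\rm n}(V)$, hence the parity of ${\rm n}(\mathcal{L}_{odd})-{\rm n}(V)$ in \eqref{krein} --- is undetermined, and the claimed instability does not follow. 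The missing observation is purely algebraic: the vector $(1,B)$ is an eigenvector of \emph{both} $2\times2$ blocks of $S$, with eigenvalue $\beta_1$ for the upper block and $\beta_2$ for the lower one (this is exactly the compatibility condition $\kappa_1+\gamma B^2=\kappa_2 B^2+\gamma$ behind \eqref{B}). Hence for the kernel element $\Theta=(0,0,\psi,B\psi)$ one has $J\Theta=(\psi,B\psi,0,0)$ lying entirely in the $\mathcal{L}_1$-channel, so that $V=(1+B^2)(\mathcal{L}_1^{-1}\psi,\psi)_{L^2_{per}}$ exactly, with no $\mathcal{L}_3$ contribution; Lemma \ref{dif-norma} then gives $V<0$ and ${\rm n}(V)=1$ in Cases I and II, which is how the paper concludes. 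Likewise, in Case IV the second kernel element $(-B\psi,\psi,0,0)$ maps under $J$ into the channel of the positive operator $\mathcal{L}_3$ of \eqref{L1L2L3-Bfree}, so its diagonal entry is positive by Remark \ref{positivity} alone: the elliptic-integral estimate you anticipate is unnecessary, and no further nondegeneracy check for \eqref{krein} is needed beyond noting that $V$ is diagonal with nonzero entries in this basis. Once these two observations are inserted, your argument closes and becomes the paper's proof.
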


To finish, we give an extension of the results obtained in \cite{Hakkaev} by showing the spectral stability concerning the semi-trivial periodic wave solution
	$(u(x,t),v(x,t)) = (e^{i \omega t} \varphi(x),0),
$
where $\varphi \in H^1_{per}$ has a cnoidal profile. Here, we also analyze the spectral properties concerning the operator $\mathcal{L}$ to obtain the spectral stability results in the same setting of parameters determined in \cite{Hakkaev}. As we have already established for the case of multiple solutions $(\ref{multiple})$, some difficulties appear in the spectral analysis of $\mathcal{L}$. To overcome all these difficulties, we study the case where $\psi=\varphi(\cdot-L/4) $ is given by $(\ref{expcn1})$.

\begin{theorem}[Spectral instability for the semi-trivial wave solution with cnoidal profile] Let $L>0$ be fixed and consider $\omega > 0$. The semi-trivial wave solution $\Psi = (\psi,0,0,0)$ is spectrally unstable when $\gamma = \kappa_1$.
\label{teosemi}	\end{theorem}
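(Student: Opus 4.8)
The plan is to exploit the fact that, for a semi-trivial wave, the linearization $J\mathcal{L}$ decouples into two independent Hamiltonian blocks, and to locate the instability inside the block that governs the $v$-perturbation. Substituting $\Psi=(\psi,0,0,0)$ and using that the base state has vanishing second component, every cross term in $\mathcal{L}$ drops out and $\mathcal{L}$ reduces to the diagonal operator ${\rm diag}(\mathcal{L}_1,\mathcal{L}_3,\mathcal{L}_2,\mathcal{L}_4)$, where $\mathcal{L}_1=-\partial_x^2+\omega-3\kappa_1\psi^2$ and $\mathcal{L}_2=-\partial_x^2+\omega-\kappa_1\psi^2$ arise from the $u$-direction, while $\mathcal{L}_3=-\partial_x^2+\omega-\gamma\psi^2$ and $\mathcal{L}_4=-\partial_x^2+\omega+\gamma\psi^2$ arise from the $v$-direction. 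Since $J$ only couples the real and imaginary parts \emph{within} each field, $J\mathcal{L}$ leaves invariant the subspace of perturbations of the form $(0,q_1,0,q_2)$, on which it acts as $J_v\mathcal{L}_v$ with $J_v=\begin{pmatrix}0&1\\-1&0\end{pmatrix}$ and $\mathcal{L}_v={\rm diag}(\mathcal{L}_3,\mathcal{L}_4)$. It therefore suffices to produce an eigenvalue with positive real part for this reduced block.

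When $\gamma=\kappa_1$ the operator $\mathcal{L}_3$ collapses to $\mathcal{L}_2$, so $\mathcal{L}_3\psi=0$ and, for the snoidal profile, ${\rm n}(\mathcal{L}_3)={\rm n}(\mathcal{L}_2)=1$ with $\Ker(\mathcal{L}_3)=[\psi]$ (this is the cnoidal count of \cite{angulo2007}); on the other hand $\mathcal{L}_4\geq\omega>0$ is strictly positive as a quadratic form, so ${\rm n}(\mathcal{L}_4)=0$ and $\Ker(\mathcal{L}_4)=\{0\}$. I would then feed these data into the index formula \eqref{krein} applied to the reduced block: ${\rm n}(\mathcal{L}_v)={\rm n}(\mathcal{L}_3)+{\rm n}(\mathcal{L}_4)=1$ and ${\rm z}(\mathcal{L}_v)=1$, with kernel spanned by $\Theta=(\psi,0)$. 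Since $J_v\Theta=(0,-\psi)$ is orthogonal to $\Ker(\mathcal{L}_v)$, the associated $1\times1$ matrix is $V_v=(\mathcal{L}_v^{-1}J_v\Theta,J_v\Theta)=(\mathcal{L}_4^{-1}\psi,\psi)_{L^2_{per}}$, which is strictly positive because $\mathcal{L}_4^{-1}$ is a positive operator; hence ${\rm n}(V_v)=0$.

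Consequently the Hamiltonian Krein index of the reduced block equals ${\rm n}(\mathcal{L}_v)-{\rm n}(V_v)=1-0=1$, an odd number. As $k_c$ and $k_-$ are even, this forces $k_r\geq1$, i.e. $J_v\mathcal{L}_v$ carries a real positive eigenvalue. The same conclusion can be reached directly: writing the eigenvalue problem as $\mathcal{L}_3\mathcal{L}_4\,q=-\lambda^2 q$ and passing to the self-adjoint conjugate $\mathcal{L}_4^{1/2}\mathcal{L}_3\mathcal{L}_4^{1/2}$, Sylvester's law of inertia gives exactly ${\rm n}(\mathcal{L}_3)=1$ negative eigenvalue, so $-\lambda^2<0$, i.e. $\lambda^2>0$ and $\lambda>0$ for that mode, a genuine $L^2_{per}$-eigenfunction. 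Because the $v$-block is an invariant subspace of the full operator, this $\lambda$ belongs to $\sigma(J\mathcal{L})$, and $\Psi$ is spectrally unstable.

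The main obstacle is the eigenvalue count ${\rm n}(\mathcal{L}_2)=1$ for the cnoidal profile: this is the delicate Floquet/oscillation input, and it is precisely where passing to the odd translate $\psi$ pays off, since the even potential $\kappa_1\psi^2$ lets one separate the even and odd sectors and pin down the band in which the zero eigenvalue sits. A secondary difficulty would arise only if one insisted on computing the index of the full $4\times4$ operator instead of the reduced block: there the delicate diagonal entry is $(\mathcal{L}_2^{-1}\psi',\psi')$ coming from the $u$-direction, whose positivity (needed to obtain the total index $3$, consistent with the known stability of the scalar cnoidal $u$-block) again rests on the odd/even decomposition afforded by $\psi$. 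Organizing the argument around the $v$-block bypasses this second computation entirely, which is why I would structure the proof that way.
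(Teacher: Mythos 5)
Your proof is correct, and it takes a genuinely different — and leaner — route than the paper's. The paper proves Theorem \ref{teosemi} by computing the Krein index of the \emph{full} $4\times 4$ linearization: it uses ${\rm n}(\mathcal{L})={\rm n}(\mathcal{L}_1)+{\rm n}(\mathcal{L}_3)+{\rm n}(\mathcal{L}_2)+{\rm n}(\mathcal{L}_4)=2+1+1+0=4$ and ${\rm z}(\mathcal{L})=3$, and then determines ${\rm n}(V)=1$ for the $3\times 3$ matrix $V$ whose diagonal entries are $(\mathcal{L}_2^{-1}\varphi',\varphi')_{L^2_{per}}$, $(\mathcal{L}_1^{-1}\varphi,\varphi)_{L^2_{per}}$ and $(\mathcal{L}_4^{-1}\varphi,\varphi)_{L^2_{per}}$, so that ${\rm n}(\mathcal{L})-{\rm n}(V)=3$ is odd and instability follows. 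That computation needs three separate sign evaluations: Remark \ref{positivity} for $\mathcal{L}_4$, Lemma \ref{dif-norma} to get $(\mathcal{L}_1^{-1}\varphi,\varphi)_{L^2_{per}}<0$, and — crucially — Lemma \ref{LemmaL2-cn} to get $(\mathcal{L}_2^{-1}\varphi',\varphi')_{L^2_{per}}>0$, whose proof in the paper rests on a numerical evaluation (Figure \ref{Figura}). Your reduction to the invariant $v$-block $J_v\mathcal{L}_v$, $\mathcal{L}_v={\rm diag}(\mathcal{L}_3,\mathcal{L}_4)$, discards all of the $u$-direction data: with $\gamma=\kappa_1$ you only need $\mathcal{L}_3=\mathcal{L}_2$, ${\rm n}(\mathcal{L}_2)=1$ and $\Ker(\mathcal{L}_2)=[\psi]$ (Angulo's cnoidal counts, which are translation-invariant, so working with $\varphi$ or $\psi$ is immaterial in $\mathbb{L}^2_{per}$), plus the trivial positivity $\mathcal{L}_4\geq\omega>0$; the block index is then $1-0=1$, odd, forcing $k_r\geq 1$, and since the block is invariant under the full $J\mathcal{L}$ this real positive eigenvalue lies in $\sigma(J\mathcal{L})$. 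This buys a proof that avoids the only numerically-assisted ingredient of the paper's argument (Lemma \ref{LemmaL2-cn}) as well as Lemma \ref{dif-norma}, and your Sylvester-inertia argument on $\mathcal{L}_4^{1/2}\mathcal{L}_3\mathcal{L}_4^{1/2}$ even makes the instability self-contained, independent of the Krein-index machinery. What the paper's longer route buys in exchange is the full Hamiltonian Krein index of the complete operator and lemmas that are reused in Theorems \ref{teo-2} and \ref{teooddsemi}; also note that your closing side remark slightly misattributes the difficulty — the count ${\rm n}(\mathcal{L}_2)=1$ comes directly from \cite{angulo2007} and does not require the odd translate, which only becomes essential for the odd-restricted result, Theorem \ref{teooddsemi}.
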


\begin{theorem}[Spectral stability/instability for the semi-trivial wave solution with cnoidal profile and restricted to the subspace of odd functions] Let $L>0$ be fixed and consider $\omega>0$. The semi-trivial wave solution $\Psi = (\psi,0,0,0)$ is spectrally stable in $\mathbb{L}_{per,odd}^2$ provided that $\gamma \in (0,\kappa_1]$. In addition, if 
$\gamma \in (\kappa_1,3\kappa_1]$,  the solution $\Psi = (\psi,0,0,0)$ is spectrally unstable in $\mathbb{L}_{per,odd}^2$.
\label{teooddsemi}\end{theorem}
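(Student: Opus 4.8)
The plan is to use that the semi-trivial profile forces $B=0$, so the matrix $S$ in $(\ref{L-1})$ is already diagonal and the conjugation by $R$ in $(\ref{ULU})$ is unnecessary. Reading off $S={\rm diag}(3\kappa_1,\gamma,\kappa_1,-\gamma)$, the operator $\mathcal{L}$ acts on $\mathbb{L}^2_{per,odd}$ as ${\rm diag}(\mathcal{L}_1,\mathcal{L}_3,\mathcal{L}_2,\mathcal{L}_4)$ with
\be\label{semiblocks}
\mathcal{L}_1=-\partial_x^2+\omega-3\kappa_1\psi^2,\quad \mathcal{L}_2=-\partial_x^2+\omega-\kappa_1\psi^2,\quad \mathcal{L}_3=-\partial_x^2+\omega-\gamma\psi^2,\quad \mathcal{L}_4=-\partial_x^2+\omega+\gamma\psi^2,
\ee
where $\psi$ solves $-\psi''+\omega\psi-\kappa_1\psi^3=0$. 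Since $\psi$ is odd and each potential $\psi^2$ is even, every block is well defined on $L^2_{per,odd}$, and I would first assemble the odd Morse indices and kernels block by block.

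The operators $\mathcal{L}_1,\mathcal{L}_2$ are exactly the scalar cubic NLS operators at the cnoidal wave, so on $L^2_{per}$ one has ${\rm n}(\mathcal{L}_1)=2$ with ${\rm Ker}(\mathcal{L}_1)=[\psi']$ and ${\rm n}(\mathcal{L}_2)=1$ with ${\rm Ker}(\mathcal{L}_2)=[\psi]$ (see \cite{angulo2007}, \cite{NataliMoraesLorenoPastor}). As $\psi'$ and the ground states are even while $\psi$ is odd, restriction to $L^2_{per,odd}$ removes one negative direction of $\mathcal{L}_1$ together with its kernel, and the unique negative direction of $\mathcal{L}_2$, giving ${\rm n}(\mathcal{L}_{1,odd})=1$ with ${\rm Ker}(\mathcal{L}_{1,odd})=\{0\}$, and ${\rm n}(\mathcal{L}_{2,odd})=0$ with ${\rm Ker}(\mathcal{L}_{2,odd})=[\psi]$. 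For $\mathcal{L}_4$ the nonnegative potential gives $\mathcal{L}_4\geq-\partial_x^2+\omega>0$, hence ${\rm n}(\mathcal{L}_{4,odd})=0$ and ${\rm Ker}(\mathcal{L}_{4,odd})=\{0\}$. The delicate block is $\mathcal{L}_3$: since odd $L$-periodic functions obey Dirichlet conditions at $0$ and $L/2$, the comparison theorem \cite[Theorem 2.2.2]{Eastham} applies, and from $-3\kappa_1\psi^2\leq-\gamma\psi^2\leq-\kappa_1\psi^2$ on $(\kappa_1,3\kappa_1)$ it sandwiches $0={\rm n}(\mathcal{L}_{2,odd})\leq{\rm n}(\mathcal{L}_{3,odd})\leq{\rm n}(\mathcal{L}_{1,odd})=1$. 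To decide the value I would track the lowest odd eigenvalue $\lambda(\gamma)$ of $\mathcal{L}_3$: the Hellmann--Feynman identity gives $\lambda'(\gamma)=-\int_0^L\psi^2\,\Xi^2\,dx<0$ for the normalized eigenfunction $\Xi$, while $\lambda(\kappa_1)=0$ with $\Xi=\psi$; hence $\lambda(\gamma)<0$ precisely for $\gamma>\kappa_1$, so ${\rm n}(\mathcal{L}_{3,odd})=1$ throughout $(\kappa_1,3\kappa_1]$ with trivial kernel, and ${\rm n}(\mathcal{L}_{3,odd})=0$ for $\gamma\in(0,\kappa_1]$ with ${\rm Ker}(\mathcal{L}_{3,odd})=[\psi]$ exactly at $\gamma=\kappa_1$.

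Collecting the blocks, ${\rm n}(\mathcal{L}_{odd})=1$ for $\gamma\in(0,\kappa_1]$ and ${\rm n}(\mathcal{L}_{odd})=2$ for $\gamma\in(\kappa_1,3\kappa_1]$, while ${\rm Ker}(\mathcal{L}_{odd})$ is spanned by $\Theta_1=(0,0,\psi,0)$ when $\gamma\neq\kappa_1$, with the extra generator $\Theta_2=(0,\psi,0,0)$ when $\gamma=\kappa_1$. I would then form the matrix $V$ in $(\ref{V})$. Using $J$ from $(\ref{J})$ one gets $J\Theta_1=(\psi,0,0,0)$ and $J\Theta_2=(0,0,0,-\psi)$, both orthogonal to ${\rm Ker}(\mathcal{L}_{odd})$, so block-diagonality makes $V$ diagonal with $V_{11}=(\mathcal{L}_1^{-1}\psi,\psi)_{L^2_{per}}$ and, when present, $V_{22}=(\mathcal{L}_4^{-1}\psi,\psi)_{L^2_{per}}>0$ since $\mathcal{L}_4>0$. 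The entry $V_{11}$ is the heart of the matter: differentiating $-\psi''+\omega\psi-\kappa_1\psi^3=0$ in $\omega$ yields $\mathcal{L}_1\,\partial_\omega\psi=-\psi$, and since ${\rm Ker}(\mathcal{L}_{1,odd})=\{0\}$ this gives $\mathcal{L}_1^{-1}\psi=-\partial_\omega\psi$ on $L^2_{per,odd}$, hence $V_{11}=-\tfrac12\,\partial_\omega\|\psi\|_{L^2_{per}}^2$.

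The main obstacle is therefore the sign of $\partial_\omega\|\psi\|^2$. Using $(\ref{cn})$ and $(\ref{omega-cnoidal})$ with $B=0$ I would reduce the mass to $\|\psi\|_{L^2_{per}}^2=\frac{32}{\kappa_1 L}\,{\rm K}(k)\big[{\rm E}(k)-(1-k^2){\rm K}(k)\big]=:\frac{32}{\kappa_1 L}\,G(k)$ as a function of the modulus $k\in(1/\sqrt2,1)$, while $\omega=\frac{16{\rm K}(k)^2(2k^2-1)}{L^2}$ is strictly increasing in $k$ there. Inserting ${\rm K}'=\frac{{\rm E}-(1-k^2){\rm K}}{k(1-k^2)}$ and ${\rm E}'=\frac{{\rm E}-{\rm K}}{k}$ and simplifying, the computation collapses to
\be\label{Gprime}
G'(k)=\frac{\big({\rm E}(k)-(1-k^2){\rm K}(k)\big)^2+k^2(1-k^2){\rm K}(k)^2}{k(1-k^2)}>0,
\ee
the numerator being a sum of a square and a manifestly positive term. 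Thus $\partial_\omega\|\psi\|^2>0$, so $V_{11}<0$ and ${\rm n}(V)=1$ in every regime. Feeding this into $(\ref{krein})$, for $\gamma\in(0,\kappa_1]$ one obtains $k_r+k_c+k_-={\rm n}(\mathcal{L}_{odd})-{\rm n}(V)=1-1=0$, whence $k_r=k_c=k_-=0$ and $\Psi$ is spectrally stable; for $\gamma\in(\kappa_1,3\kappa_1]$ one obtains $2-1=1$, an odd number, forcing $k_r\geq1$ and hence spectral instability. I expect the two genuinely technical points to be the monotonicity argument upgrading the comparison bound to ${\rm n}(\mathcal{L}_{3,odd})=1$ on the open interval, and the elliptic-integral identity $(\ref{Gprime})$.
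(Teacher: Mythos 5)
Your proposal is correct and follows essentially the same route as the paper's proof: the same diagonal decomposition of $\mathcal{L}$ into the blocks $\mathcal{L}_1,\mathcal{L}_3,\mathcal{L}_2,\mathcal{L}_4$, the same odd-subspace counts ${\rm n}(\mathcal{L}_{1,odd})=1$, ${\rm n}(\mathcal{L}_{2,odd})={\rm n}(\mathcal{L}_{4,odd})=0$ with ${\rm Ker}(\mathcal{L}_{2,odd})=[\psi]$, the same kernel vectors and diagonal matrix $V$ with $V_{11}=(\mathcal{L}_1^{-1}\psi,\psi)_{L^2_{per}}=-\tfrac12\frac{d}{d\omega}\|\psi\|_{L^2_{per}}^2<0$, and the same Krein counts $1-1=0$ (stability on $(0,\kappa_1]$) and $2-1=1$ (instability on $(\kappa_1,3\kappa_1]$). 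The only deviations are minor: where the paper pins down ${\rm n}(\mathcal{L}_{3,odd})$ directly from the strict comparison theorem of \cite{Eastham} (treating $\gamma=\kappa_1$ and $\gamma=3\kappa_1$ via the identifications $\mathcal{L}_3=\mathcal{L}_2$ and $\mathcal{L}_3=\mathcal{L}_1$), you reach the same count by a Hellmann--Feynman monotonicity argument in $\gamma$, and you explicitly carry out the elliptic-integral computation showing $\frac{d}{dk}\left[{\rm K}(k)\left({\rm E}(k)-(1-k^2){\rm K}(k)\right)\right]>0$, which the paper's Lemma \ref{dif-norma} asserts without displaying.
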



Our paper is organized as follows: 
In Section \ref{section3}, we show the existence of a smooth curve of periodic standing wave solutions of \textit{dnoidal} and \textit{cnoidal} type for the equation \eqref{ODE}. The spectral analysis for the operators $\mathcal{L}$ is determined in Section \ref{section4}. The spectral stability/instability for the multiple wave solutions with dnoidal and cnoidal profile are then established in Section \ref{section5}. Finally, in Section \ref{section6}, we prove the spectral stability/instability result concerning the semi-trivial wave with cnoidal profile.\\


\textbf{Notation:} For $s\geq0$ and $L>0$, the Sobolev space
$H^s_{per}:=H^s_{per}([0,L])$
consists of all periodic functions $f$ such that
$$
\|f\|^2_{H^s_{per}}:= L \sum_{k=-\infty}^{\infty}(1+k^2)^s|\hat{f}(k)|^2 <\infty
$$
where $\hat{f}$ is the periodic Fourier transform of $f$. The space $H^s_{per}$ is a  Hilbert space with the inner product denoted by $(\cdot, \cdot)_{H^s}$. When $s=0$, the space $H^s_{per}$ is isometrically isomorphic to the space  $L^2([0,L])$ and will be denoted by $L^2_{per}:=H^0_{per}$ (see, e.g., \cite{Iorio}). The norm and inner product in $L^2_{per}$ will be denoted by $\|\cdot \|_{L^2_{per}}$ and $(\cdot, \cdot)_{L^2_{per}}$. 

For $s\geq0$, we denote
$$
H^s_{per,odd}:=\{ f \in H^s_{per} \; ; \; f \:\; \text{is an odd function}\}.
$$
endowed with the norm and inner product in $H^s_{per}$.\\
\indent In addition, to facilitate the comprehension of the readers, for $s \geq 0$ and $(f,g) \in H^s_{per} \times H^s_{per}$ (complex), we can write $(f,g)=({\rm Re}\,f, {\rm Re}\,g,{\rm Im}\,f, {\rm Im}\,g)$ and 
$$
\mathbb{H}^s_{per}:= H^s_{per} \times H^s_{per} \times H^s_{per} \times H^s_{per} \; \; \text{ and } \; \; \mathbb{H}^s_{per,odd}:=H^s_{per,odd} \times H^s_{per,odd} \times H^s_{per,odd} \times H^s_{per,odd}
$$
equipped with their usual norms and scalar products.


The symbols $\sn(\cdot, k), \dn(\cdot, k)$ and $\cn(\cdot, k)$ represent the Jacobi elliptic functions of \textit{snoidal}, \textit{dnoidal}, and \textit{cnoidal} type, respectively. For $k \in (0, 1)$, ${\rm F}(\phi, k)$ and $\E(\phi, k)$  denote the complete elliptic integrals of the first and second kind, respectively, and  we denote by $\K(k)={\rm F}\left(\frac{\pi}{2},k\right)$ and $\E(k)=\E\left(\frac{\pi}{2},k\right)$ (for additional details, see \cite{byrd}).


\section{Existence of a Smooth Curve of Periodic Waves}\label{section3}
Our purpose in this section is to present the existence of $L$-periodic solutions $\varphi: \mathbb{R} \longrightarrow \mathbb{R}$ for the following ODE  
\begin{equation}\label{ode1}
- \varphi''+\omega\varphi- (\kappa_1 + \gamma B^2)\varphi^{3}=0,
\end{equation}
where $\omega > 0$.

\subsection{$L$-periodic wave solutions with dnoidal profile}

Consider the ODE
$$
	-\phi'' + \omega \phi - \phi^3 = 0.
$$
By \cite{angulo2007}, we obtain periodic solutions with dnoidal profile as
\begin{equation}\label{psi}
\phi(x)=\frac{2 \sqrt{2} {\rm K}(k)}{L} {\rm dn}\left( \frac{2 {\rm K}(k)}{L} x, k \right),
\end{equation}
where $k\in(0,1)$. The frequency $\omega$ depends smoothly on $k \in (0,1)$ and $L>0$ is defined by
\begin{equation}\label{omega-1}
	\omega = \frac{ 4(2-k^2) {\rm K}(k)^2}{L^2}.
\end{equation} 
Then, considering the transformation 
\begin{equation}\label{dnoidalsol}
	\varphi(x) = \frac{1}{(\kappa_1 + \gamma B^2)^{1/2}} \phi(x),
\end{equation} we obtain that $\varphi$ is an $L$-periodic solution for the equation \eqref{ode1} with $\omega$ given by the relation \eqref{omega-1} and defined in a subset of $(0,+\infty)$. We have the following result:

\begin{theorem}[Smooth Curve of Dnoidal Waves]\label{dnoidalcurve}
Let $L>0$ be fixed and consider  $\omega \in (\frac{2\pi^2}{L^2},+\infty)$. If $\varphi=\varphi_{\omega}$ is the solution of \eqref{ode1} with the dnoidal profile in \eqref{dnoidalsol}, the family
	$$
	\omega \in \left(\frac{2\pi^2}{L^2},+\infty\right) \longmapsto \varphi \in H^2_{per},
	$$
of periodic solutions of \eqref{ode1} depends smoothly on $\omega$.
\end{theorem}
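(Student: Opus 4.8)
The plan is to exploit the explicit representation already recorded before the statement: both the profile $\varphi=\varphi_k$ in \eqref{dnoidalsol} and the frequency $\omega=\omega(k)$ in \eqref{omega-1} are explicit functions of the elliptic modulus $k\in(0,1)$. It therefore suffices to prove that $k\mapsto\omega(k)$ is a smooth diffeomorphism of $(0,1)$ onto $\left(\frac{2\pi^2}{L^2},+\infty\right)$; composing its inverse with the map $k\mapsto\varphi_k$ will then deliver the smooth curve $\omega\mapsto\varphi_\omega\in H^2_{per}$.

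First I would observe that $k\mapsto\varphi_k$ is smooth into $H^2_{per}$. Since $\kappa_1+\gamma B^2$ is a fixed positive constant (it depends only on $\kappa_1,\kappa_2,\gamma$ and $B$, not on $\omega$), this reduces to the joint smoothness of $(u,k)\mapsto\dn(u,k)$ and of $k\mapsto\K(k)$ on $(0,1)$, together with the standard fact that $k\mapsto\dn\!\left(\frac{2\K(k)}{L}\,\cdot\,,k\right)$ is $C^\infty$ into $H^2_{per}$ on compact subsets of $(0,1)$.

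The heart of the argument is the strict monotonicity of $\omega(k)=\frac{4(2-k^2)\K(k)^2}{L^2}$. Differentiating and using the classical identity $\frac{d\K}{dk}=\frac{\E-(1-k^2)\K}{k(1-k^2)}$, I would reduce the computation to $\frac{d\omega}{dk}=\frac{8\K}{L^2 k(1-k^2)}\,f(k)$, where $f(k):=(2-k^2)\E(k)-2(1-k^2)\K(k)$. A short calculation using $\frac{d\E}{dk}=\frac{\E-\K}{k}$ then collapses to the clean identity $f'(k)=3k\big(\K(k)-\E(k)\big)$, which is strictly positive on $(0,1)$ because $\K>\E$ there; since $f(0)=0$, this gives $f(k)>0$, and hence $\frac{d\omega}{dk}>0$ throughout $(0,1)$. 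Together with the boundary behaviour $\omega(0^+)=\frac{2\pi^2}{L^2}$ (from $\K(0)=\frac{\pi}{2}$) and $\omega(1^-)=+\infty$ (from $\K(1^-)=+\infty$), this makes $\omega$ a strictly increasing smooth bijection with non-vanishing derivative.

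By the inverse function theorem the inverse $\omega\mapsto k(\omega)$ is then smooth, and composition with $k\mapsto\varphi_k$ yields the asserted smooth curve. I expect the only real obstacle to be the monotonicity step, i.e.\ proving $f(k)>0$; the reduction to $f'(k)=3k(\K-\E)$ via the derivative formulas for the complete elliptic integrals is the decisive simplification, after which positivity is immediate. As a robust alternative that avoids inverting $\omega(k)$ explicitly, one could instead apply the implicit function theorem to $G(\varphi,\omega)=-\varphi''+\omega\varphi-(\kappa_1+\gamma B^2)\varphi^3$ on $H^2_{per,even}\times\left(\frac{2\pi^2}{L^2},+\infty\right)$: here $D_\varphi G$ equals the Hill operator $\mathcal{L}_1=-\partial_x^2+\omega-\beta_1\varphi^2$ with $\beta_1=3(\kappa_1+\gamma B^2)$, whose kernel $[\varphi']$ consists of odd functions, so $\mathcal{L}_1$ is injective---hence an isomorphism---on the even subspace, and the local curves patch together by uniqueness, as in \cite{angulo2007}.
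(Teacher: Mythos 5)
Your proposal is correct, but it is considerably more self-contained than what the paper actually does: the paper offers no proof of Theorem \ref{dnoidalcurve} at all, instead constructing $\varphi$ by the constant rescaling \eqref{dnoidalsol} of the normalized dnoidal wave from \cite{angulo2007} and inheriting the smooth-curve statement from that reference (where it is proved essentially by the implicit-function-theorem argument you sketch at the end, exploiting that $\Ker(\mathcal{L}_1)=[\varphi']$ is odd so $\mathcal{L}_1$ is invertible on the even subspace). Your primary route is genuinely different and more elementary: you prove directly that $k\mapsto\omega(k)=\frac{4(2-k^2)\K(k)^2}{L^2}$ is a smooth, strictly increasing bijection of $(0,1)$ onto $\left(\frac{2\pi^2}{L^2},+\infty\right)$ and then invert. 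I checked the key computation and it is right: writing $\frac{d\omega}{dk}=\frac{8\K}{L^2k(1-k^2)}f(k)$ with $f(k)=(2-k^2)\E(k)-2(1-k^2)\K(k)$, the derivative formulas $\frac{d\K}{dk}=\frac{\E-(1-k^2)\K}{k(1-k^2)}$ and $\frac{d\E}{dk}=\frac{\E-\K}{k}$ do collapse $f'$ to $3k(\K-\E)>0$, and $f(0^+)=0$, so $f>0$ on $(0,1)$; the endpoint limits $\omega(0^+)=\frac{2\pi^2}{L^2}$ and $\omega(1^-)=+\infty$ follow from $\K(0)=\frac{\pi}{2}$ and $\K(1^-)=+\infty$. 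What your approach buys is an explicit, verifiable monotonicity statement (which, incidentally, is the same kind of computation the paper later needs in Lemma \ref{dif-norma}); what the paper's citation-based route buys is brevity and a framework (IFT on the even subspace) that generalizes to situations where no explicit parametrization by the elliptic modulus is available. The one point you should make explicit if this were written out in full is the smoothness of $k\mapsto\dn\!\left(\frac{2\K(k)}{L}\,\cdot\,,k\right)$ as an $H^2_{per}$-valued map, which you correctly flag as standard (joint real-analyticity of $\dn$ in its arguments on $\R\times(0,1)$ suffices).
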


 \subsection{$L$-periodic wave solutions with cnoidal profile}
 
 For $\omega> 0$, Angulo in \cite{angulo2007} obtained that the ODE
 \begin{equation*}
 	-\phi'' + \omega \phi - \phi^3 = 0
 \end{equation*} 
 also admits periodic solutions with cnoidal profile as
\begin{equation}\label{phi}
 \phi(x)=\frac{ \sqrt{2 \omega} k}{\sqrt{2k^2 -1}} {\rm cn}\left( \frac{4 {\rm K}(k)}{L} x, k \right),
 \end{equation}
 where $k\in\left(\frac{1}{\sqrt{2}},1\right)$. The frequency $\omega \in \mathbb{R}$ depends smoothly on $k \in \left(\frac{1}{\sqrt{2}},1\right)$ and $L>0$. It is defined by
 \begin{equation}\label{omega-2}
 	\omega = \frac{ 16 {\rm K}(k)^2 (2k^2 - 1)}{L^2}.
 \end{equation} 
 \indent Motivated by this, we consider the same scaling transformation as in \eqref{dnoidalsol} 
 \begin{equation}\label{cnoidalsol}
 	\varphi(x) = \frac{1}{(\kappa_1 + \gamma B^2)^{1/2}} \phi(x).
 \end{equation} We obtain that $\varphi$ is an $L$-periodic solution of \eqref{ode1} for $\omega >0$ given by \eqref{omega-2} and the similar result as in Theorem $\ref{dnoidalcurve}$ reads as follows:

 \begin{theorem}[Smooth Curve of Cnoidal Waves]\label{cnoidalcurve}
 	Let $L>0$ be fixed and consider  $\omega >0$. If $\varphi=\varphi_{\omega}$ is the solution of \eqref{ode1} with cnoidal profile given by \eqref{cnoidalsol}, then the family
 	$$
 	\omega \in (0,+\infty) \longmapsto \varphi=\varphi_{\omega} \in H^2_{per},
 	$$
 	of periodic solutions of \eqref{ode1} depends smoothly on $\omega \in (0,+\infty)$.
 \end{theorem}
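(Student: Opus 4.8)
The plan is to prove Theorem~\ref{cnoidalcurve} by the implicit function theorem, mirroring the argument for the dnoidal curve in Theorem~\ref{dnoidalcurve}, after first using the explicit dependence of $\omega$ on the elliptic modulus $k$ to fix the parameter interval. I would begin by observing that the map $k\in(\frac{1}{\sqrt{2}},1)\longmapsto\omega(k)=\frac{16\K(k)^2(2k^2-1)}{L^2}$ from \eqref{omega-2} is smooth and strictly increasing, since $\K(k)^2$ and $2k^2-1$ are both positive, smooth and strictly increasing on this interval; moreover $\omega(k)\to 0^+$ as $k\to\frac{1}{\sqrt{2}}^+$ (because $2k^2-1\to 0$) and $\omega(k)\to+\infty$ as $k\to 1^-$ (because $\K(k)\to+\infty$). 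Hence $k\mapsto\omega(k)$ is a smooth bijection of $(\frac{1}{\sqrt{2}},1)$ onto $(0,+\infty)$ with $\omega'(k)>0$, and the scalar inverse function theorem makes $\omega\mapsto k(\omega)$ smooth on $(0,+\infty)$. This both fixes the domain of the curve as $(0,+\infty)$ and exhibits \eqref{cnoidalsol} as a genuine one-parameter family indexed by $\omega$.

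Next, since the cnoidal profile $\varphi$ in \eqref{cnoidalsol} is even, I would work in the subspace of even periodic functions and define
$$
F:(0,+\infty)\times H^2_{per,even}\longrightarrow L^2_{per,even},\qquad F(\omega,\varphi)=-\varphi''+\omega\varphi-(\kappa_1+\gamma B^2)\varphi^3 .
$$
This $F$ is real-analytic, because the nonlinearity is polynomial and $H^2_{per}$ is a Banach algebra, and for each fixed $\omega_0\in(0,+\infty)$ the explicit profile $\varphi_{\omega_0}$ satisfies $F(\omega_0,\varphi_{\omega_0})=0$. The Fréchet derivative in the second variable is the Hill operator
$$
D_\varphi F(\omega_0,\varphi_{\omega_0})=-\partial_x^2+\omega_0-3(\kappa_1+\gamma B^2)\varphi_{\omega_0}^2=:\mathcal{L}_1
$$
(indeed $3(\kappa_1+\gamma B^2)=\beta_1$ for $B$ as in \eqref{B}, so this is precisely the operator $\mathcal{L}_1$ of \eqref{Li}). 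If I can show that $\mathcal{L}_1$ is an isomorphism of $H^2_{per,even}$ onto $L^2_{per,even}$, then the implicit function theorem yields, near each $\omega_0$, a unique smooth branch $\omega\mapsto\varphi_\omega\in H^2_{per,even}$; by local uniqueness this branch coincides with the explicit even cnoidal solution, and patching the local branches over all $\omega_0$ produces the global smooth curve claimed.

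The crux of the proof, and the step I expect to be the main obstacle, is precisely the invertibility of $\mathcal{L}_1$ on the even subspace. Since $\mathcal{L}_1$ is self-adjoint with compact resolvent, it is Fredholm of index zero, so it suffices to prove that its kernel contains no nonzero even function. Differentiating \eqref{ode1} in $x$ gives $\mathcal{L}_1\varphi'=0$ with $\varphi'$ odd, so the task reduces to showing that $\Ker(\mathcal{L}_1)$ on $H^2_{per}$ is exactly the one-dimensional space $[\varphi']$. After the rescaling $u=\frac{4\K(k)}{L}x$, the equation $\mathcal{L}_1\Psi=0$ becomes the Jacobi form of the Lamé equation
$$
\Psi_{uu}+\big[(4k^2+1)-6k^2\sn^2(u,k)\big]\Psi=0,
$$
which is the case $n=2$; a direct check shows $\varphi'\propto\sn(u,k)\dn(u,k)$ and that it corresponds to the value $h=4k^2+1$, a simple $2\K$-antiperiodic band edge lying strictly between the two lowest $2\K$-periodic eigenvalues. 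Simplicity of this band edge rules out coexistence, so the space of $L$-periodic solutions of $\mathcal{L}_1\Psi=0$ is one-dimensional and hence equal to $[\varphi']$; as $\varphi'$ is odd, no nonzero even function lies in the kernel and $\mathcal{L}_1$ is invertible on $L^2_{per,even}$. Equivalently, one may take the spectral count for cnoidal potentials from \cite{angulo2007} together with the Floquet comparison theorem \cite[Theorem 2.2.2]{Eastham} to reach the same conclusion, after which the implicit function theorem delivers the smooth curve on $(0,+\infty)$.
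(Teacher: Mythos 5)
Your proof is correct, but it takes a genuinely different (and much more self-contained) route than the paper. The paper offers essentially no functional-analytic argument for Theorem \ref{cnoidalcurve}: its justification is exactly your first paragraph, namely that the cnoidal profile \eqref{cnoidalsol} is an explicit expression, jointly smooth in $x$ and the modulus $k$, taken from \cite{angulo2007}, and that the map $k\mapsto\omega(k)=16\K(k)^2(2k^2-1)/L^2$ in \eqref{omega-2} is a smooth, strictly increasing bijection of $\left(\frac{1}{\sqrt{2}},1\right)$ onto $(0,+\infty)$; composing the explicit $k$-parametrized formula with the smooth inverse $k(\omega)$ already gives the smooth curve $\omega\mapsto\varphi_\omega\in H^2_{per}$, and the constant rescaling \eqref{cnoidalsol} changes nothing. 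Your second and third paragraphs replace this citation-plus-scaling by a full implicit-function-theorem argument on $H^2_{per,even}$, whose crux is the invertibility of $\mathcal{L}_1=-\partial_x^2+\omega-3(\kappa_1+\gamma B^2)\varphi^2$ on the even subspace; your verification there is sound: the rescaled kernel equation is the $n=2$ Lam\'e equation, $\varphi'\propto\sn(u,k)\dn(u,k)$ sits at $h=1+4k^2$, which is a simple antiperiodic band edge (the gap $(1+k^2,\,1+4k^2)$ is open for $k\in(0,1)$), so ${\rm Ker}(\mathcal{L}_1)=[\varphi']$ is odd and one-dimensional, in agreement with ${\rm z}(\mathcal{L}_1)=1$, ${\rm n}(\mathcal{L}_1)=2$ used later in Section \ref{section4}. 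What your approach buys is robustness and independence from closed-form solution formulas (it would survive perturbations of the nonlinearity), plus a re-derivation of the spectral data of $\mathcal{L}_1$ as a by-product; what it costs is redundancy: once the explicit formula and the smoothness of $k(\omega)$ are in hand, the theorem already follows, and indeed your final step identifying the local IFT branch with the explicit family quietly relies on exactly that continuity of $\omega\mapsto\varphi_\omega$, so the IFT is doing no irreplaceable work. Both arguments are valid; yours proves strictly more than the paper records.
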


\begin{remark}\label{cnoidal-odd}
	Recall that by $(\ref{cnoidalsol})$, we have
	\begin{equation}\label{cnoidal-2}
		\psi(x)=\varphi\left(x-\frac{L}{4}\right)=\frac{ \sqrt{2 \omega} k\sqrt{1-k^2}}{\sqrt{2k^2 - 1}} \frac{1}{(\kappa_1 + \gamma B^2)^{1/2}} \frac{{\rm sn}\left( \frac{ 4 {\rm K}(k)}{L} x, k \right)}{{\rm dn}\left( \frac{ 4 {\rm K}(k)}{L} x, k \right)}.
	\end{equation}
By Theorem $\ref{cnoidalcurve}$, we obtain that $$
\omega \in (0,+\infty) \longmapsto \psi=\psi_{\omega} \in H^2_{per,odd},
$$
depends smoothly on $\omega \in (0,+\infty)$.
\end{remark}

\section{Spectral analysis}\label{section4}

In this section, we calculate the non-positive spectrum of the linear operator $\mathcal{L}$ by using the information of the non-positive spectrum of the Hill operators $\mathcal{L}_i$, $i=1,2,3,4$, given by \eqref{Li}. For the case of dnoidal solutions, we borrow the results determined by Angulo \cite{angulo2007} and Hakkaev \cite{Hakkaev}. On the other hand, for the case of cnoidal solutions, we use some results given by Angulo in \cite{angulo2007} and Natali \textit{et. al.} in \cite{NataliMoraesLorenoPastor}. 

\subsection{Spectral analysis with dnoidal profile}

\indent Recall that operators $\mathcal{L}_i = -\partial_x^2 + \omega - \beta_i \varphi^2$ can be expressed in terms of the parameters $\beta_i$ in  \eqref{betai}. We have
\begin{equation*}
	\mathcal{L}_1 = -\partial_x^2 + \omega - \beta_1 \varphi^2 = -\partial_x^2 + \omega - 3(\kappa_1 + B^2 \gamma) \varphi^2 = -\partial_x^2 + \omega - 3 \phi^2
\end{equation*}
and
\begin{equation*}
	\mathcal{L}_2 = -\partial_x^2 + \omega - \beta_2 \varphi^2 = -\partial_x^2 + \omega - (\kappa_1 + B^2 \gamma) \varphi^2 = -\partial_x^2 + \omega -  \phi^2,
\end{equation*}
where $\phi$ is the solution with dnoidal profile given by \eqref{psi}. So, we can use the spectral properties for the operators $\mathcal{L}_1$ and $\mathcal{L}_2$ as obtained in \cite{angulo2007} and \cite{Hakkaev} to obtain
	${\rm n}(\mathcal{L}_1) = 1,\ {\rm n}(\mathcal{L}_2) = 0,$
	${\rm Ker}(\mathcal{L}_1) = [\varphi']$ and ${\rm Ker}(\mathcal{L}_2) = [\varphi].$
Thus, we need to study the spectral analysis of the operator $\mathcal{L}$ in some different cases:

\textbf{Case I:} $\gamma \in (0, \min\{\kappa_1,\kappa_2\})$. In this case, after some calculations with the parameters $\beta_i$, $i=1,2,3,4$, we conclude that
	\begin{equation}\label{ineq-1}
		\beta_4 < \beta_2 < \beta_3 < \beta_1.
	\end{equation}
for all $\gamma \in (0,\min\{\kappa_1,\kappa_2\})$. Then, we have the following order of operators
\begin{equation}\label{comparison}
	\mathcal{L}_1 < \mathcal{L}_3 < \mathcal{L}_2 < \mathcal{L}_4,
\end{equation} 
where $\mathcal{L}_i<\mathcal{L}_j$ means that $(\mathcal{L}_i u,u)_{L^2_{per}} < (\mathcal{L}_j u,u)_{L^2_{per}}$ for all $u \in H^2_{per}$, $u\neq0$ and $i,j\in\mathbb{N}$, $i\neq j$. From the comparison theorem (in the periodic context (see \cite[Theorem 2.2.2]{Eastham})) and the inequalities in \eqref{comparison}, we have 
	${\rm n}(\mathcal{L}_3) = 1,\ {\rm n}(\mathcal{L}_4) = 0$
and ${\rm z}(\mathcal{L}_3) = {\rm z}(\mathcal{L}_4) = \{0\}$. 

Therefore, it follows by $(\ref{ULU})$ that ${\rm n}(\mathcal{L}) = 2$ and ${\rm z}(\mathcal{L})=2$ with
$$
	{\rm Ker}(\mathcal{L}) = \left[(\varphi', B\varphi',0,0), (0,0,\varphi, B\varphi)\right].
$$

\textbf{Case II:} $\gamma \in (\max\{\kappa_1,\kappa_2\}, +\infty)$. By considering $\gamma \in (\max\{\kappa_1,\kappa_2\}, +\infty)$, one has
\begin{equation}\label{ineq-2}
	\beta_4 < \beta_3 < \beta_2 < \beta_1.
\end{equation}

So, we obtain by \eqref{ineq-2}
\begin{equation}\label{comparison2}
	\mathcal{L}_1 < \mathcal{L}_2 < \mathcal{L}_3 < \mathcal{L}_4.
\end{equation}
Thus, using the comparison theorem and \eqref{comparison2}, it follows that
${\rm n}(\mathcal{L}_3)={\rm n}(\mathcal{L}_4)=0$ and ${\rm z}(\mathcal{L}_3)={\rm z}(\mathcal{L}_4)=0$. 

Again, by $(\ref{ULU})$ we get ${\rm n}(\mathcal{L}) = 1$ and ${\rm z}(\mathcal{L})=2$ where
$$
	{\rm Ker}(\mathcal{L}) = \left[(\varphi', B\varphi',0,0), (0,0,\varphi, B\varphi)\right].
$$

\textbf{Case III:} $\gamma = 0$. First, we have to notice that for $B>0$, we obtain $B = \sqrt{\frac{\kappa_1}{\kappa_2}}$. By the expression of the matrix $S$ given by \eqref{S}, we do not need to use the similar transformation $R$. Operator $\mathcal{L}$ has a diagonal form and given by 
\begin{equation}\label{L-gamma0}
	\mathcal{L} = \left( \begin{array}{cccc}
	\mathcal{L}_1 & 0 & 0 & 0 \\
	0 & \mathcal{L}_1 & 0 & 0 \\
	0 & 0 & \mathcal{L}_2 & 0 \\
	0 & 0 & 0 & \mathcal{L}_2
	\end{array}
\right),
\end{equation}
that is, $\mathcal{L}_3=\mathcal{L}_1$ and $\mathcal{L}_4=\mathcal{L}_2$.\\
\indent The spectral analysis can be directly determined by the behaviour of the linearized operators $\mathcal{L}_i$, $i=1,2,3,4$. Since ${\rm n}(\mathcal{L}_1) = 1,\ {\rm n}(\mathcal{L}_2) = 0$ and
	${\rm z}(\mathcal{L}_1)={\rm z}(\mathcal{L}_2) = 1,$  we have that ${\rm n}(\mathcal{L}) = 2$ and ${\rm z}(\mathcal{L}) = 4$ where
$$
	{\rm Ker}(\mathcal{L}) = \left[ (\varphi', 0,0,0), (0,\varphi', 0,0), (0,0,\varphi,0), (0,0,0,\varphi) \right].
$$

\textbf{Case IV:} $\gamma = \kappa_1 = \kappa_2$. In this case, we have that $B$ is a free real parameter. Moreover, we also use the similar transformation $S = R M R^{-1}$, where $S$ is given by \eqref{S}. The matrix $M$ and $R$ are then given respectively by
\begin{equation}\label{M-Bfree}
	M = \left( \begin{array}{cccc}
		3(B^2+1)\gamma & 0 & 0 & 0 \\
		0 & (B^2 + 1) \gamma & 0 & 0 \\
		0 & 0 & (B^2 + 1) \gamma & 0 \\
		0 & 0 & 0 & -(B^2 + 1)\gamma
	\end{array}
\right),
\end{equation}
and
\begin{equation}\label{U-Bfree}
	R = \left( \begin{array}{cccc}
		-\frac{1}{B^2+1} & -B & 0 & 0 \\
		-\frac{B}{B^2+1} & 1 & 0 & 0 \\
		0 & 0 & 1 & \frac{B}{B^2+1}  \\
		0 & 0 & B & -\frac{1}{B^2+1}
	\end{array}
\right).
\end{equation}
Thus, operator $\mathcal{L}$ becomes in this case
\begin{equation}\label{L-Bfree}
	\mathcal{L} = R \left( \begin{array}{cccc}
	\mathcal{L}_1 & 0 & 0 & 0 \\
	0 & \mathcal{L}_2 & 0 & 0 \\
	0 & 0 & \mathcal{L}_2 & 0 \\
	0 & 0 & 0 & \mathcal{L}_3
	\end{array}
\right) R^{-1},
\end{equation}
where
\begin{equation}\label{L1L2L3-Bfree}
	\mathcal{L}_1 = -\partial_x^2 + \omega - 3(B^2+1)\gamma \varphi^2, \; \mathcal{L}_2 = -\partial_x^2 + \omega - (B^2+1) \gamma \varphi^2 \text{ and } \mathcal{L}_3 = -\partial_x^2 + \omega + (B^2+1) \gamma \varphi^2.
\end{equation}

As far as we can see, we have that $(B^2+1)\gamma = \kappa_1 + B^2 \gamma$. So, we obtain the same spectral properties concerning the operators $\mathcal{L}_1$ and $\mathcal{L}_2$. In addition, being $\mathcal{L}_3$ a positive operator, we have that ${\rm n}(\mathcal{L}) = 1$ and ${\rm z}(\mathcal{L}) = 3$, where 
$$
	{\rm Ker}(\mathcal{L}) = \left[ (\varphi', B\varphi',0,0), (-B\varphi, \varphi,0,0), (0, 0, \varphi, B\varphi) \right].
$$
\subsection{Spectral analysis with cnoidal profile}\label{cnoidalprof}

Let $\varphi$ be the solution with cnoidal profile given by Theorem \ref{cnoidalcurve}. For $B>0$ given by \eqref{B}, the transformation $R$ such that $S =R M R^{-1}$ is also given by \eqref{U-Bfixed}. As a consequence, the parameters $\beta_i$, $i=1,2,3,4$ are also given by \eqref{betai} and operators $\mathcal{L}_1$ and $\mathcal{L}_2$ can be expressed as
\begin{equation}\label{eq-L2}
	\begin{array}{c}
	\mathcal{L}_1 = -\partial_x^2 + \omega - \beta_1 \varphi^2 = -\partial_x^2 + \omega - 3(\kappa_1 + B^2 \gamma) \varphi^2 = -\partial_x^2 + \omega - 3 \phi^2 \text{ and } \vspace{0.3cm} \\
	\mathcal{L}_2 = -\partial_x^2 + \omega - \beta_2 \varphi^2 = -\partial_x^2 + \omega - (\kappa_1 + B^2 \gamma) \varphi^2 = -\partial_x^2 + \omega -  \phi^2,
	\end{array}
\end{equation}
where $\phi$ is the solution with cnoidal profile in \eqref{phi}. So, we can use the spectral properties for the operators $\mathcal{L}_1$ and $\mathcal{L}_2$ as obtained in \cite{angulo2007}. Indeed, we have
${\rm n}(\mathcal{L}_1) = 2,\ {\rm n}(\mathcal{L}_2) = 1$ and ${\rm z}(\mathcal{L}_1)={\rm z}(\mathcal{L}_2)=1$. 

 It is necessary to understand that when the number of negative eigenvalues is high (compared with the case of dnoidal solutions), we obtain some difficulties to obtain the spectral properties for the operator $\mathcal{L}$ in $(\ref{L-1})$. So, we can study some different cases:

\textbf{Case I:} $\gamma \in (0,\min\{\kappa_1,\kappa_2\})$. By \eqref{ineq-1}, we have that 
\begin{equation*}
	\mathcal{L}_1 < \mathcal{L}_3 < \mathcal{L}_2 < \mathcal{L}_4.
\end{equation*}
Then, using the comparison theorem one has ${\rm n}(\mathcal{L}_3) = 2$ and ${\rm z}(\mathcal{L}_3)=0$. In addition, we have that $\beta_4 < 0$ for all $\gamma \in (\frac{1}{3} ( \kappa_1+\kappa_2-\sqrt{\kappa_1^2-\kappa_1\kappa_2+\kappa_2^2}), \min\{\kappa_1,\kappa_2\})$ and this implies that ${\rm n}(\mathcal{L}_4) = {\rm z}(\mathcal{L}_4) = 0$. Therefore, we have in this case
${\rm n}(\mathcal{L}) = 5 \text{ and } {\rm z}(\mathcal{L}) = 2$ with
$${\rm Ker}(\mathcal{L}) = \left[ (\varphi', B\varphi', 0,0), (0,0,\varphi, B\varphi) \right].$$
\textbf{Case II:} $\gamma \in (\max\{\kappa_1,\kappa_2\},+\infty)$. By \eqref{ineq-2}, we obtain
\begin{equation*}
	\mathcal{L}_1 < \mathcal{L}_2 < \mathcal{L}_3 < \mathcal{L}_4.
\end{equation*}
Using the comparison theorem and the previous knowledge of the non-positive spectrum for $\mathcal{L}_1$ and $\mathcal{L}_2$, we cannot determine the behaviour of the non-positive spectrum for $\mathcal{L}_3$. In fact, we can obtain the following scenarios:
\begin{equation}\label{tricotomy}
	{\rm n}(\mathcal{L}_3) = 1 \text{ and } {\rm z}(\mathcal{L}_3) = 0, \; \; \; \;	{\rm n}(\mathcal{L}_3) = 0 \text{ and } {\rm z}(\mathcal{L}_3) = 1 \; \; \text{ or } \; \; 	{\rm n}(\mathcal{L}_3) = {\rm z}(\mathcal{L}_3) = 0.
\end{equation}
Thus, the spectral analysis becomes inconclusive and we cannot obtain the required spectral stability for the cnoidal waves in this case.

\textbf{Case III:} $\gamma = 0$. As in the dnoidal case, we have $B = \sqrt{\frac{\kappa_1}{\kappa_2}}$ and consequently, the operator $\mathcal{L}$ is a diagonal operator as in \eqref{L-gamma0}. Thus, from the comparison theorem we obtain 
	\begin{equation*}
		{\rm n}(\mathcal{L}) = 6 \text{ and } {\rm Ker}(\mathcal{L}) = \left[ (\varphi',0,0,0), (0,\varphi',0,0), (0,0,\varphi,0), (0,0,0,\varphi) \right].
	\end{equation*}

\textbf{Case IV:} $\gamma = \kappa_1 = \kappa_2$. Here $B \in \mathbb{R}$ is a free parameter and we obtain, as in case of dnoidal solutions, the similar transformation $S = RMR^{-1}$, where $M$ and $R$ are given by \eqref{M-Bfree} and \eqref{U-Bfree}, respectively. Thus, the operator $\mathcal{L}$ can be also expressed by \eqref{L-Bfree} where the operators $\mathcal{L}_i$, $i=1,2,3,4$ are given by \eqref{L1L2L3-Bfree}. 

We have to notice that $(B^2 + 1)\gamma = \kappa_1 + B^2 \gamma$, ${\rm n}(\mathcal{L}_1) = 2$ and ${\rm n}(\mathcal{L}_2) = 1$ and $\mathcal{L}_3$ being a positive operator. Summarizing all mentioned results, we get
\begin{equation*}
	{\rm n}(\mathcal{L}) = 4 \text{ and } {\rm Ker}(\mathcal{L}) = \left[ (\varphi', B\varphi', 0,0), (-B\varphi, \varphi,0,0), (0,0,\varphi, B \varphi)\right]. 
\end{equation*}

\subsection{Spectral analysis in $H_{per,odd}^2$}\label{subsection-odd} To address the excessive number of negative eigenvalues, we must impose a suitable restriction on the operator $\mathcal{L}$ in $(\ref{L-1})$. First, by the spectral analysis for the cnoidal solution in \cite{angulo2007}, we obtain that the first three eigenvalues and the corresponding eigenfunctions of $\mathcal{L}_1$ are 
\begin{align}
\lambda_0 = (1 - 6k^2 - 2a(k))\left(\frac{16 {\rm K}(k)^2}{L^2} \right) \; \; \; & \; \; \; \phi_0(x) = k^2 {\rm sn}^2\left( \frac{4 {\rm K}(k)}{L} x,k\right) - \frac{1}{3} (1+k^2+a(k)), \label{eigf-1} \\
\lambda_1 = -3k^2 \left(\frac{16 {\rm K}(k)^2}{L^2} \right) \; \; \; & \; \; \; \phi_1(x) =  {\rm cn}\left( \frac{4 {\rm K}(k)}{L} x,k\right) {\rm dn}\left( \frac{4 {\rm K}(k)}{L} x,k\right), \\
\lambda_2 = 0 \; \; \; & \; \; \; \phi_2(x) = \partial_x {\rm cn}\left( \frac{4 {\rm K}(k)}{L} x,k\right),
\end{align}
where $a(k):=\sqrt{1-k^2+k^4}$. In addition, the first two eigenvalues and the corresponding eigenfunctions for the linearized operator $\mathcal{L}_2$ are 
\begin{align}
	\lambda_0 = \frac{16 {\rm K}(k)^2 (k^2-1)}{L^2} \; \; \; & \; \; \; \phi_0(x) = {\rm dn}\left( \frac{4 {\rm K}(k)}{L} x,k\right), \\
	\lambda_1 = 0 \; \; \; & \; \; \; \phi_1(x) = {\rm cn}\left( \frac{4 {\rm K}(k)}{L} x,k\right). \label{eigf-2}
\end{align}

Considering $\psi$ as in \eqref{cnoidal-2}, we have that the operators
 $$\mathcal{L}_{i,odd}:= \mathcal{L}_i: H^2_{per,odd} \rightarrow L^2_{per,odd},$$
 are well defined for all $i=1,2,3,4$. Thus, by applying the transformation $f=g(\cdot-L/4)$ in all eigenfunctions given in \eqref{eigf-1}-\eqref{eigf-2}, we conclude
\begin{equation}\label{L1L2-odd}
	{\rm n}(\mathcal{L}_{1,odd}) = 1 \; \; \text{ and } {\rm n}(\mathcal{L}_{2,odd}) = 0.
\end{equation}
In addition,
\begin{equation*}
	{\rm Ker}(\mathcal{L}_{1,odd}) = \{0\} \; \; \text{ and } \; \; {\rm Ker}(\mathcal{L}_{2,odd}) = [\psi].
\end{equation*}

Therefore, since we have almost the same scenario as determined for the case of dnoidal solutions, we can use the comparison theorem without further problems. We can analyse again the cases in this new perspective:

\textbf{Case I:} $\gamma \in (0,\min\{\kappa_1,\kappa_2\})$. Since in this case, we have $\mathcal{L}_{1,odd} < \mathcal{L}_{3,odd} < \mathcal{L}_{2,odd} < \mathcal{L}_{4,odd}$, we obtain by the comparison theorem and \eqref{L1L2-odd} that ${\rm n}(\mathcal{L}_{3,odd}) = 1$, ${\rm n}(\mathcal{L}_{4,odd}) = 0$ and ${\rm Ker}(\mathcal{L}_{3,odd}) = {\rm Ker}(\mathcal{L}_{4,odd}) = \{0\}$. Therefore,
\begin{equation*}
	{\rm n}(\mathcal{L}_{odd}) = 2 \text{ and } {\rm Ker}(\mathcal{L}_{odd}) = \left[ (0,0,\psi,B\psi)\right].
\end{equation*}

\textbf{Case II:} $\gamma \in (\max\{\kappa_1,\kappa_2\},+\infty)$. Here, we have $\mathcal{L}_{1,odd} < \mathcal{L}_{2,odd} < \mathcal{L}_{3,odd} < \mathcal{L}_{4,odd}$. By the comparison theorem and \eqref{L1L2-odd}, we obtain ${\rm n}(\mathcal{L}_{3,odd}) = {\rm n}(\mathcal{L}_{4,odd}) = 0$ and ${\rm Ker}(\mathcal{L}_{3,odd}) = {\rm Ker}(\mathcal{L}_{4,odd}) = \{0\}$. Thus, 
\begin{equation*}
	{\rm n}(\mathcal{L}_{odd}) = 1 \text{ and } {\rm Ker}(\mathcal{L}_{odd}) = \left[ (0,0,\psi,B\psi)\right].
\end{equation*}

\textbf{Case III}: $\gamma = 0$. In this case, we have that $B = \sqrt{\frac{\kappa_1}{\kappa_2}}$ and 
\begin{equation*}
	\mathcal{L}_{odd} = \left(
	\begin{array}{cccc}
		\mathcal{L}_{1,odd} & 0 & 0 & 0 \\
		0 & \mathcal{L}_{1,odd} & 0 & 0 \\
		0 & 0 & \mathcal{L}_{2,odd} & 0 \\
		0 & 0 & 0 & \mathcal{L}_{2,odd}
	\end{array}
\right).
\end{equation*}
So, we get
\begin{equation*}
	{\rm n}(\mathcal{L}_{odd}) = 2 \text{ and } {\rm Ker}(\mathcal{L}_{odd}) = \left[ (0,0,\psi,0), (0,0,0,\psi) \right].
\end{equation*}

\textbf{Case IV:} $\gamma = \kappa_1 = \kappa_2$. In this case, $B \in \mathbb{R}$ is a free parameter and we can use the similar transformation $S = RMR^{-1}$ where $M$ and $R$ given in \eqref{M-Bfree} and \eqref{U-Bfree}, respectively (similar to the case of dnoidal waves). Thus,  since $\mathcal{L}_{3,odd}$ is now positive, the operator $\mathcal{L}$ in \eqref{L-Bfree} restricted to $H_{per,odd}^2$ satisfies
\begin{equation*}
	{\rm n}(\mathcal{L}_{odd})=1 \text{ and } {\rm Ker}(\mathcal{L}_{odd}) = \left[ (-B\psi, \psi,0,0), (0,0,\psi, B\psi) \right].
\end{equation*}

\section{Spectral stability}\label{section5}
In this section, we obtain spectral stability results for the periodic multiple solution $\Phi = (\varphi, B\varphi,0,0)$ considering three different scenarios:  when $\varphi$ has a dnoidal profile, when $\varphi$ has a cnoidal profile, and when $\psi=\varphi(\cdot-L/4)$ has a snoidal profile and it is restricted to the space of odd functions $H^1_{per,odd}$. To do so, we need to obtain the entries of the matrix $V$ in \eqref{V}. In fact, we can obtain a simplified way to obtain the matrix $V$ using the transformation $(\ref{ULU})$ and the fact that $\Theta_l\in \Ker(\mathcal{L})$. Thus, we have
\begin{equation*}
V_{jl} = ( \mathcal{L}^{-1} J \Theta_j, J \Theta_l )_{\mathbb{L}^2_{per}} = ( \tilde{\mathcal{L}}^{-1} R^{-1} J \Theta_j, R^{-1} J \Theta_l )_{\mathbb{L}^2_{per}}.
\end{equation*}

To determine our spectral stability result, we also consider the spectral analysis of the operator $\mathcal{L}$ in $(\ref{L-1})$ as determined in the last section. Before presenting all possible cases of the matrix $V$, we need to introduce two important and well-known facts (a remark and a lemma). Both of them are useful to improve the reader's understanding.

\begin{remark}\label{positivity}
Let  $\mathcal{A}$ be a self-adjoint operator defined in a Hilbert space $H$ with dense domain $D(\mathcal{A})$. Suppose also that its spectrum $\sigma(\mathcal{A})$ is constituted only by an infinite discrete set of eigenvalues and satisfying $\sigma(\mathcal{A})\subset[0,+\infty)$. There exists $\delta > 0$ such that 
\begin{equation*}
	(\mathcal{A} v, v)_{H} \geq \delta \|v\|_{H}^2
\end{equation*}
for all $v \in D(\mathcal{A})$ satisfying $v \in {\rm Ker}(\mathcal{A})^\perp$. In fact,
	since $H$ is a Hilbert space, we have the decomposition $H = {\rm Ker}(\mathcal{A}) \oplus {\rm Ker}(\mathcal{A})^\perp$. From Theorem 6.17 in \cite[page 178]{Kato}, we have
	\begin{equation*}
		\sigma( \mathcal{A} ) = \sigma \left( \mathcal{A} \big{|}_{{\rm Ker}(\mathcal{A})} \right) \cup \sigma\left( \mathcal{A} \big{|}_{{\rm Ker}(\mathcal{A})^\perp} \right).
	\end{equation*}
	On the other hand, we have that
	\begin{equation*}
	\sigma\left( \mathcal{A} \big{|}_{{\rm Ker}(\mathcal{A})^\perp} \right) = \sigma(\mathcal{A}) \setminus \{0\},
	\end{equation*}
	that is, the spectrum is bounded from below. The arguments in \cite[page 279]{Kato} imply that $\mathcal{A}$ is also bounded from below. Therefore, there exists $\delta > 0$ satisfying
	\begin{equation*}
		(\mathcal{A} v, v) \geq \delta \|v\|_{L^2_{per}}^2 \text{ for all } v \in H^2_{per} \cap {\rm Ker}(\mathcal{A})^\perp.
	\end{equation*}
\end{remark}

\begin{lemma}\label{dif-norma}
	Let $L>0$ be fixed. Consider the smooth periodic waves $\varphi$ with dnoidal and cnoidal profiles given by Theorem \ref{dnoidalcurve} and Theorem \ref{cnoidalcurve}, respectively. Then, it follows that $\frac{d}{d\omega} \|\varphi\|_{L_{per}^2}^2 > 0$.
\end{lemma}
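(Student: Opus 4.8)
The plan is to reduce the statement to a monotonicity computation in the elliptic modulus $k$. Since the prefactor $(\kappa_1 + \gamma B^2)^{-1/2}$ in \eqref{dnoidalsol} and \eqref{cnoidalsol} does not depend on $\omega$, we have $\|\varphi\|_{L^2_{per}}^2 = (\kappa_1 + \gamma B^2)^{-1}\|\phi\|_{L^2_{per}}^2$, so it suffices to prove $\frac{d}{d\omega}\|\phi\|_{L^2_{per}}^2 > 0$ for the normalized profiles $\phi$ in \eqref{psi} and \eqref{phi}. In each case $\omega$ is an explicit function of $k$ through \eqref{omega-1} or \eqref{omega-2} (with $L$ fixed), so I would write $\frac{d}{d\omega}\|\phi\|^2 = (d\|\phi\|^2/dk)\big/(d\omega/dk)$ and show that numerator and denominator have the same sign; in fact I will prove that both $\|\phi\|^2$ and $\omega$ are strictly increasing in $k$.

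First I would evaluate the norms in closed form. Using the substitution $u = \frac{2K(k)}{L}x$ (resp. $u = \frac{4K(k)}{L}x$) together with the standard antiderivatives $\int_0^{K} \dn^2 = E$ and $\int_0^{K} \cn^2 = (E - (1-k^2)K)/k^2$, one obtains $\|\phi\|^2 = \frac{8}{L}\,KE$ in the dnoidal case and $\|\phi\|^2 = \frac{32}{L}\big[KE - (1-k^2)K^2\big]$ in the cnoidal case. Differentiating with the classical formulas $\frac{dK}{dk} = \frac{E-(1-k^2)K}{k(1-k^2)}$ and $\frac{dE}{dk} = \frac{E-K}{k}$ then yields $\frac{d(KE)}{dk} = \frac{E^2 - (1-k^2)K^2}{k(1-k^2)}$ and $\frac{d}{dk}\big[KE - (1-k^2)K^2\big] = \frac{E^2 - 2(1-k^2)KE + (1-k^2)K^2}{k(1-k^2)}$.

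The positivity of these two derivatives is where the elliptic-integral inequalities enter. For the cnoidal numerator I would observe that, read as a quadratic in $E$, it has discriminant $-4k^2(1-k^2)K^2 < 0$ and positive leading coefficient, hence is positive for every $k\in(\frac{1}{\sqrt2},1)$; this makes the cnoidal norm manifestly increasing. For the dnoidal numerator I factor $E^2 - (1-k^2)K^2 = (E - \sqrt{1-k^2}\,K)(E + \sqrt{1-k^2}\,K)$ and prove $E > \sqrt{1-k^2}\,K$ from the integral representation, since $E - \sqrt{1-k^2}\,K = \int_0^{\pi/2} \frac{k^2\cos^2\theta}{\sqrt{1-k^2\sin^2\theta}}\,d\theta > 0$. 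It then remains to check $d\omega/dk > 0$: for the cnoidal frequency I compute $\frac{d}{dk}\big[(2k^2-1)K^2\big] = \frac{2K}{k(1-k^2)}\big[(2k^2-1)E + (1-k^2)K\big] > 0$ directly, while for the dnoidal frequency I reduce to $r(k) := (2-k^2)E - 2(1-k^2)K > 0$, which follows from $r(0) = 0$ together with $r'(k) = 3k(K-E) > 0$ (using $K>E$ on $(0,1)$).

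The main obstacle is purely the sign analysis of the combinations of $K$ and $E$ that appear after differentiation; once the closed forms and the two elementary inequalities $E > \sqrt{1-k^2}\,K$ and $K>E$ (both classical and provable from integral representations) are in hand, the conclusion $\frac{d}{d\omega}\|\phi\|^2 = (\text{positive})/(\text{positive}) > 0$ is immediate. I would present the dnoidal and cnoidal cases in parallel, as they share the same strategy and the same derivative formulas for $K$ and $E$.
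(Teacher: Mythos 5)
Your overall strategy is essentially the paper's own proof: both compute $\|\varphi\|_{L^2_{per}}^2$ in closed form via the Byrd--Friedman formulas (giving $\frac{8}{L(\kappa_1+\gamma B^2)}{\rm K}{\rm E}$ in the dnoidal case and $\frac{32}{L(\kappa_1+\gamma B^2)}{\rm K}\left[{\rm E}-(1-k^2){\rm K}\right]$ in the cnoidal case) and then pass to the modulus $k$ through the chain rule $\frac{d}{d\omega}=\frac{d}{dk}\left(\frac{d\omega}{dk}\right)^{-1}$. In fact you go further than the paper, which merely asserts that the resulting expression is positive for all $k$: your verification of the cnoidal numerator (negative discriminant of the quadratic in ${\rm E}$), and of both frequency derivatives (the identity $r'(k)=3k({\rm K}-{\rm E})$ for $r(k)=(2-k^2){\rm E}-2(1-k^2){\rm K}$, and the direct computation for $(2k^2-1){\rm K}^2$) are correct.

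However, your treatment of the dnoidal numerator contains a genuine error. You claim
\[
{\rm E} - \sqrt{1-k^2}\,{\rm K} \;=\; \int_0^{\pi/2}\frac{k^2\cos^2\theta}{\sqrt{1-k^2\sin^2\theta}}\,d\theta ,
\]
but since $k^2\cos^2\theta=(1-k^2\sin^2\theta)-(1-k^2)$, that integral equals ${\rm E}-(1-k^2){\rm K}$, not ${\rm E}-\sqrt{1-k^2}\,{\rm K}$. Because $\sqrt{1-k^2}>1-k^2$ on $(0,1)$, the inequality you actually establish, ${\rm E}>(1-k^2){\rm K}$, is strictly weaker than the one you need, ${\rm E}>\sqrt{1-k^2}\,{\rm K}$; the latter is delicate (the difference is only of order $k^4$ as $k\to0$), so it cannot be inferred from the weaker bound. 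The gap is fixable within your framework: set $f(k)={\rm E}^2-(1-k^2){\rm K}^2$; then $f(0)=0$ and, using $\frac{d{\rm E}}{dk}=\frac{{\rm E}-{\rm K}}{k}$ and $\frac{d{\rm K}}{dk}=\frac{{\rm E}-(1-k^2){\rm K}}{k(1-k^2)}$, one computes
\[
f'(k)=\frac{2({\rm E}-{\rm K})^2}{k}>0 \quad\text{for } k\in(0,1),
\]
which yields ${\rm E}^2>(1-k^2){\rm K}^2$ and hence the positivity of $\frac{d}{dk}({\rm K}{\rm E})$. With this replacement your argument is complete and matches (indeed, fills in the positivity assertions of) the paper's proof.
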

\begin{proof} 
	Let $L>0$ be fixed and consider $\varphi$ as the dnoidal profile given by Theorem $\ref{dnoidalcurve}$. Then, by \cite[Formula 314.02]{byrd} we get
	\begin{align*}
		\int_{0}^{L} \varphi(x)^2 dx & = \frac{ 8{\rm K}(k)^2}{L^2(\kappa_1 + \gamma B^2)} \int_{0}^L {\rm dn}^2 \left( \frac{2 {\rm K}(k)}{L} x,k \right) dx \\
		& = \frac{ 8 {\rm K}(k)}{L (\kappa_1 + \gamma B^2)} \int_{0}^{{\rm K}(k)} {\rm dn}^2 (u,k) du \\
		& = \frac{8}{L(\kappa_1 + \gamma B^2)} {\rm E}(k) {\rm K}(k).
	\end{align*} 
Thus, we obtain 
$$
	\frac{d}{d\omega} \|\varphi\|_{L^2_{per}}^2 = \frac{d}{d\omega} \left( \frac{8}{L(\kappa_1 + \gamma B^2)} {\rm E}(k) {\rm K}(k) \right) = \frac{8}{L(\kappa_1 + \gamma B^2)} \frac{d}{dk} \left( {\rm E}(k) {\rm K}(k) \right) \left( \frac{d\omega}{d k} \right)^{-1} > 0
$$
for all $k \in (0,1)$.\\
\indent On the other hand, let $\varphi$ be the cnoidal profile given by Theorem 
	$\ref{cnoidalcurve}$. By \cite[Formula 312.02]{byrd}, we have
	\begin{align*}
		\int_{0}^{L} \varphi(x)^2 dx & = \frac{ 32 k^2 {\rm K}(k)^2}{L^2(\kappa_1 + \gamma B^2)} \int_{0}^L {\rm cn}^2 \left( \frac{4 {\rm K}(k)}{L} x,k \right) dx \\
		& = \frac{ 32 k^2 {\rm K}(k)}{L (\kappa_1 + \gamma B^2)} \int_{0}^{{\rm K}(k)} {\rm cn}^2 (u,k) du \\
		& = \frac{32}{L(\kappa_1 + \gamma B^2)} \left( {\rm K}(k) \left( {\rm E}(k) - (1-k^2) {\rm K}(k) \right) \right).
	\end{align*}
	Thus, we obtain that
	$$
		\frac{d}{d\omega} \|\varphi\|_{L^2_{per}}^2 = \frac{32}{L (\kappa_1 + \gamma B^2)} \frac{d}{dk} \left[ {\rm K}(k) \left( {\rm E}(k) - (1-k^2) {\rm K}(k) \right) \right] \left( \frac{ d\omega}{dk} \right)^{-1} > 0
	$$
	for all $k \in (0,1)$.
\end{proof}

\begin{remark}
	The result obtained in Lemma \ref{dif-norma} can be applied in the case of odd periodic waves $\psi$ determined in Remark $\ref{cnoidal-odd}$. 
\end{remark}

\subsection{Spectral stability for the multiple periodic wave solution with dnoidal profile}
In what follows, we consider the dnoidal wave solution $\varphi$ determined by Theorem \ref{dnoidalcurve}. Since we have separated the spectral analysis into four cases, we need to consider the same four cases in order to establish the spectral stability for the multiple solution $\Phi=(\varphi,B\varphi,0,0)$.

\textbf{Case I:} $\gamma \in (0,\min\{\kappa_1,\kappa_2\})$. Since ${\rm z}(\mathcal{L})=2$ with $\Theta_1=(\varphi',B\varphi',0,0)$ and $\Theta_2=(0,0,\varphi,B\varphi)$, we obtain that the matrix $V$ is $2\times 2$ and given by
\begin{equation}\label{V-1}
\begin{array}{lllll}
V &=& \left( \begin{array}{cc}
		(\tilde{\mathcal{L}}^{-1} R^{-1} J \Theta_1, R^{-1} J \Theta_1)_{L_{per}^2} & (\tilde{\mathcal{L}}^{-1} R^{-1} J \Theta_1, R^{-1} J \Theta_2)_{L_{per}^2} \\
		(\tilde{\mathcal{L}}^{-1} R^{-1} J \Theta_2, R^{-1} J \Theta_1)_{L_{per}^2} & (\tilde{\mathcal{L}}^{-1} R^{-1} J \Theta_2, R^{-1} J \Theta_2)_{L_{per}^2}
	\end{array}
\right).\\\\
	&=&\left( \begin{array}{cc}
	(2\gamma - \kappa_1 - \kappa_2)^2 ( \mathcal{L}_2^{-1} \varphi', \varphi' )_{L^2_{per}} & 0 \\
	0 & (\gamma - \kappa_2)^{-2} ( \mathcal{L}_1^{-1} \varphi,\varphi )_{L^2_{per}}
\end{array}
	\right),
\end{array}
\end{equation}
where we are using the similar transformation $\mathcal{L}=R\tilde{\mathcal{L}}R^{-1}$ to obtain a more convenient expression for the entries of the matrix $V$.\\
\indent Since $\varphi' \in ({\rm Ker}(\mathcal{L}_2))^{\perp} = {\rm Range}(\mathcal{L}_2)$, there exists $\xi \in D(\mathcal{L}_2)$ such that $\mathcal{L}_2 \xi = \varphi'$. Since $\mathcal{L}_2$ does not have negative eigenvalues, we obtain that $\xi$ satisfy the conditions of Remark \ref{positivity}, so that $( \mathcal{L}_2^{-1} \varphi',\varphi' )_{L^2_{per}} > 0$. On the other hand, by Theorem $\ref{dnoidalcurve}$, we can derive the equation \eqref{ODE} with respect to $\omega$ to obtain that $\mathcal{L}_1 \left( \frac{d}{d\omega} \right) = -\varphi$. Thus, by Lemma \ref{dif-norma} we get
\begin{equation*}
	( \mathcal{L}_1^{-1} \varphi, \varphi )_{L^2_{per}} = - \left( \frac{d}{d\omega} \varphi, \varphi \right)_{L^2_{per}} = - \frac{1}{2} \frac{d}{d\omega} \|\varphi\|_{L^2_{per}}^2 < 0.
\end{equation*}
Thus, we have ${\rm n}(V) = 1$. Since ${\rm n}(\mathcal{L}) = 2$, we conclude that the multiple solution $\Phi = (\varphi, B\varphi,0,0)$ is spectrally unstable. 

\textbf{Case II:} $\gamma \in (\max\{\kappa_1,\kappa_2\},+\infty)$. The kernel of $\mathcal{L}$ in this case has the same elements as in the last case, so that the matrix $V$ is the same as in \eqref{V-1}. Since we have ${\rm n}(\mathcal{L})={\rm n}(V) = 1$, we conclude that the multiple solution $\Phi = (\varphi, B \varphi,0,0)$ is spectrally stable. 

\textbf{Case III:} $\gamma = 0$. Since ${\rm z}(\mathcal{L})=4$ with $\Theta_1=(\varphi', 0,0,0)$, $\Theta_2=(0,\varphi', 0,0)$, $\Theta_3=(0,0,\varphi,0)$, and $\Theta_4=(0,0,0,\varphi)$, we obtain that the matrix $V$ is $4\times 4$ and given by
\begin{equation*}
	V = \left( \begin{array}{cccc}
		( \mathcal{L}_2^{-1} \varphi', \varphi' )_{L^2_{per}} & 0 & 0 & 0 \\
		0 & ( \mathcal{L}_2^{-1} \varphi',\varphi' )_{L^2_{per}} & 0 & 0 \\
		0 & 0 & ( \mathcal{L}_1^{-1} \varphi, \varphi )_{L^2_{per}} & 0 \\
		0 & 0 & 0 & ( \mathcal{L}_1^{-1} \varphi, \varphi )_{L^2_{per}}
	\end{array}
\right).
\end{equation*}
Doing the same calculations as we have already performed in the first case, we obtain ${\rm n}(V) = 2$. Since ${\rm n}(\mathcal{L}) = 2$, we have that the difference ${\rm n}(\mathcal{L}) - {\rm n}(V)$ is zero and the periodic multiple solution is spectrally stable.\\
\indent  \textbf{Case IV:} $\gamma = \kappa_1 = \kappa_2$. Now, we have ${\rm z}(\mathcal{L})=3$ with 
$\Theta_1=(\varphi', B\varphi',0,0)$, $\Theta_2=(-B\varphi, \varphi,0,0)$ and 
$\Theta_3=(0, 0, \varphi, B\varphi)$. We obtain that the matrix $V$ is $3\times 3$ and given by
\begin{equation*}
	V = \left( \begin{array}{ccc}
		( \mathcal{L}_2^{-1} \varphi',\varphi' )_{L^2_{per}} & 0 & 0 \\
		0 & (B^2 + 1)^2 ( \mathcal{L}_3^{-1} \varphi,\varphi )_{L^2_{per}} & 0 \\
		0 & 0 & (B^2+1)^2 ( \mathcal{L}_1^{-1} \varphi, \varphi )_{L^2_{per}}
	\end{array}
\right).
\end{equation*}
We see in this case that $\mathcal{L}_3$ is positive, so that $( \mathcal{L}_3^{-1} \varphi,\varphi )_{L^2_{per}}>0$. By similar arguments as determined in the first case, we then obtain ${\rm n}(V) = 1$. Since ${\rm n}(\mathcal{L}) = 1$, we get that the multiple solution $\Phi = (\varphi, B \varphi,0,0)$ is spectrally stable. 

Summarizing the above, we have proved Theorem \ref{teo-1}.

\begin{remark}\label{remstab}
	
	The abstract theories in \cite{grillakis1} and \cite{grillakis2} can be used to establish the orbital stability of periodic dnoidal waves in certain cases, where we have previously established the spectral stability as determined in this subsection. To this end, we need to have the following set of conditions:\\
	\begin{itemize}
		\item ${\rm n}(\mathcal{L})=1$,
		\item ${\rm z}(\mathcal{L})=2$,
		\item ${\rm n}(V)=1$ with $( \mathcal{L}_1^{-1} \varphi, \varphi )_{L^2_{per}}<0$.
	\end{itemize}
The three requirements mentioned above occur exactly in the second case above (Case II). In the first case (Case I), the orbital instability in the space $\mathbb{H}_{per,even}^1$ constituted by even periodic functions in $\mathbb{H}_{per}^1$ can be established using the instability results in \cite{grillakis1} and \cite{grillakis2}.
\end{remark}

\subsection{Spectral stability for the multiple periodic wave solution with cnoidal profile}
Here, we consider the periodic multiple wave solution $\Phi = (\varphi, B \varphi,0,0)$ where $\varphi$ has the cnoidal profile and we determine the corresponding spectral stability. After that, we consider the translation solution $\psi=\varphi(\cdot-L/4)$ to study the spectral stability in  $\mathbb{H}_{per,odd}^1$ for the case where the operator $\mathcal{L}$ has too many negative eigenvalues.


We have to notice that in the case of dnoidal profile, we have used Remark \ref{positivity} to obtain that $( \mathcal{L}_2^{-1} \varphi',\varphi' )_{L^2_{per}} > 0$. To do so, we need to use the fact that ${\rm n}(\mathcal{L}_2) = 0$. However, when we are considering the cnoidal profile, we obtain ${\rm n}(\mathcal{L}_2) = 1$ and this property does not allow us to use directly Remark \ref{positivity} to evaluate the positiveness of $( \mathcal{L}_2^{-1} \varphi',\varphi')_{L^2_{per}}$. This difficulty can be avoided by the following lemma:

\begin{lemma}\label{LemmaL2-cn}
	Let $L>0$ be fixed and consider $\varphi$ the periodic wave with cnoidal profile given by Theorem $\ref{cnoidalcurve}$. We have that
	\begin{equation*}
		( \mathcal{L}_2^{-1} \varphi',\varphi' )_{L^2_{per}} > 0.
	\end{equation*}
\end{lemma}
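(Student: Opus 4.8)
The plan is to exploit the parity structure of the cnoidal wave together with the spectral decomposition of $\mathcal{L}_2$. The obstruction to applying Remark \ref{positivity} directly is precisely that ${\rm n}(\mathcal{L}_2)=1$: there is a single negative direction, and \emph{a priori} $\varphi'$ could carry a nontrivial component along it. The key observation I would make is that this component in fact vanishes for parity reasons, which reduces the computation to the strictly positive part of the spectrum.

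First I would recall the spectral picture of $\mathcal{L}_2=-\partial_x^2+\omega-\phi^2$ established in \cite{angulo2007} and recorded in \eqref{eigf-2}: the single negative eigenvalue $\lambda_0=\frac{16{\rm K}(k)^2(k^2-1)}{L^2}$ is simple with eigenfunction $\phi_0={\rm dn}\!\left(\frac{4{\rm K}(k)}{L}\cdot\,,k\right)$, the kernel is spanned by $\phi_1={\rm cn}\!\left(\frac{4{\rm K}(k)}{L}\cdot\,,k\right)$, and all remaining eigenvalues are strictly positive. Since the potential $-\phi^2$ is even, $\mathcal{L}_2$ commutes with the reflection $x\mapsto -x$, so its eigenfunctions carry a definite parity; in particular both $\phi_0$ and $\phi_1$ are even. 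Because $\varphi$ is the (even) cnoidal profile, $\varphi'$ is odd.

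Next I would invoke the spectral theorem: $\mathcal{L}_2$ is self-adjoint with compact resolvent, so there is an orthonormal basis $\{\psi_n\}_{n\geq 0}$ of $L^2_{per}$ of eigenfunctions, with eigenvalues $\mu_0=\lambda_0<0$, $\mu_1=0$, and $\mu_n>0$ for $n\geq 2$. Expanding $\varphi'=\sum_{n\geq 0} c_n\psi_n$, parity gives $c_0=(\varphi',\phi_0)_{L^2_{per}}=0$ and $c_1=(\varphi',\phi_1)_{L^2_{per}}=0$, since an odd function is $L^2_{per}$-orthogonal to an even one over a full period. In particular $\varphi'\in{\rm Ker}(\mathcal{L}_2)^{\perp}={\rm Range}(\mathcal{L}_2)$, so $\mathcal{L}_2^{-1}\varphi'$ is well defined and
$$
(\mathcal{L}_2^{-1}\varphi',\varphi')_{L^2_{per}}=\sum_{n\geq 2}\frac{c_n^2}{\mu_n}.
$$
Every term is nonnegative because $\mu_n>0$ for $n\geq 2$, and the sum is strictly positive since $\varphi'\not\equiv 0$ forces $c_n\neq 0$ for some $n\geq 2$. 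This yields the claim.

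The only delicate point I foresee is verifying that $\varphi'$ genuinely has no component in the non-positive subspace; once the evenness of $\phi_0$ and $\phi_1$ is pinned down this is immediate, so I do not expect a substantial obstacle. Equivalently, and without coordinates, I could set $\eta=\mathcal{L}_2^{-1}\varphi'$, which lies in the positive spectral subspace $P$ of $\mathcal{L}_2$ (as $\varphi'\in P$ and $\mathcal{L}_2$ preserves $P$), and then apply Remark \ref{positivity} to the restriction $\mathcal{L}_2|_{P}$ to get $(\mathcal{L}_2^{-1}\varphi',\varphi')_{L^2_{per}}=(\mathcal{L}_2\eta,\eta)_{L^2_{per}}\geq \delta\|\eta\|_{L^2_{per}}^2>0$.
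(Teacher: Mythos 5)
Your proof is correct, but it follows a genuinely different route from the paper's. The paper does not argue via parity at all: it constructs $\chi=\mathcal{L}_2^{-1}\phi'$ by setting up an initial value problem for the inhomogeneous Hill equation $-\chi''+\omega\chi-\phi^2\chi=\phi'$, using the non-periodic element $y$ of the fundamental system $\{\phi,y\}$ to determine $\chi'(0)$, and then evaluates $(\chi,\phi')_{L^2_{per}}=\eta(k)/L$ \emph{numerically}, reading off positivity of $\eta(k)$ for $k\in\left(1/\sqrt{2},1\right)$ from a plot (Figure \ref{Figura}). Your argument replaces that numerical verification by a purely spectral and parity-based one: the only non-positive eigenvalues of $\mathcal{L}_2$ are simple (${\rm n}(\mathcal{L}_2)={\rm z}(\mathcal{L}_2)=1$, by \cite{angulo2007}) with even eigenfunctions ${\rm dn}$ and ${\rm cn}$, so the odd function $\varphi'$ is orthogonal to the entire non-positive spectral subspace, and the quadratic form of $\mathcal{L}_2^{-1}$ is strictly positive on its orthogonal complement. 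This is, in essence, the same mechanism the paper itself exploits later in Subsection \ref{subsection-odd}, where restricting to odd functions removes the non-positive spectrum of $\mathcal{L}_2$; you simply apply it directly to the unshifted operator and the particular source term $\varphi'$. What your approach buys is a rigorous, numerics-free, and shorter proof; what the paper's computation buys is quantitative information (the actual profile of $\eta(k)$) and a method that works for right-hand sides without any definite parity. The one step that must not be skipped in your argument is the simplicity of the two lowest eigenvalues --- otherwise an odd eigenfunction could hide in the non-positive spectrum --- and you do state it, so the argument is complete.
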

\begin{proof}
By \eqref{eq-L2}, we can rewrite $\mathcal{L}_2 = -\partial_x^2 + \omega - \phi^2$ where $\phi$ is the cnoidal wave solution in \eqref{phi}. Since $\varphi$ in \eqref{cnoidalsol} is a multiple of $\phi$, there exists $\chi \in D(\mathcal{L}_2)$ such that $\mathcal{L}_2 \chi = \phi'$. To calculate the value of $( \mathcal{L}_2^{-1} \varphi',\varphi' )_{L^2_{per}}$, it suffices to evaluate the quantity $(\chi,\phi')_{L^2_{per}}$. To this end, we use a similar approach as in \cite[Section 3]{NataliCardosoAmaral} (see also \cite{BittencourtLoreno} and \cite{NataliMoraesLorenoPastor}).

We can start by noticing that $\lambda = 0$ is a simple eigenvalue with associated eigenfunction $\phi$. Thus, there exists a smooth non-periodic function $y$ satisfying the Hill equation
\begin{equation}\label{y-hillequation}
	- y'' + \omega y + \phi^2 y = 0,
\end{equation}
and $\{\varphi, y\}$ is the fundamental set of solutions for the equation $\eqref{y-hillequation}$. Since $\phi$ is even, we have that $y$ is odd and it satisfies the following
\begin{equation*}
	\left\{ \begin{array}{l}
		-y'' + \omega y - \phi^2 y = 0 \\
		y(0) = 0 \\
		y'(0) = \frac{1}{\phi(0)}.
		\end{array}
	\right.
\end{equation*}
\indent Next, we see that $\chi \in D(\mathcal{L}_2)$ satisfies the equation $\mathcal{L}_2 \chi = \phi'$, so that
\begin{equation}\label{chi-hill}
	-\chi'' + \omega \chi - \phi^2 \chi = \phi'.
\end{equation}
Multiplying \eqref{chi-hill} by ${y}$, integrating over $[0,L]$ and using integration by parts, we obtain 
\begin{equation*}
	\chi'(0) = - \frac{ \int_{0}^L \phi'(x) {y(x)} dx}{{y}(L)},
	\end{equation*}
where we are using the fact that $y$ is not periodic, so that $y(L)\neq0$. The fact that $\chi$ is an odd function gives us the following IVP:
\begin{equation}\label{chi-pvi}
	\left\{ \begin{array}{l}
		-\chi'' + \omega \chi - \phi^2 \chi = \phi' \\
		\chi(0) = 0 \\
		\chi'(0) = -\frac{1}{{y}(L)} \int_{0}^L \phi'(x) {y(x)} dx.
	\end{array}
\right.
\end{equation}
Problem \eqref{chi-pvi} is suitable to perform some numeric calculations. In fact, we can deduce that 
\begin{equation*}
(\chi, \phi')_{L^2_{per}} = \frac{\eta(k)}{L}
\end{equation*}
where $\eta$ is a positive constant depending only on $k \in (\frac{1}{\sqrt{2}},1)$. We obtain $(\chi, \phi')_{L^2_{per}} > 0$ for all $k \in (\frac{1}{\sqrt{2}},1)$ (see Figure $\ref{Figura}$), so that
\begin{equation*}
	(\mathcal{L}_2^{-1} \varphi',\varphi')_{L^2_{per}} = \frac{1}{(\kappa_1 + \gamma B^2)} (\mathcal{L}_2^{-1} \phi',\phi')_{L^2_{per}} = (\chi, \phi')_{L^2_{per}} > 0.
\end{equation*}



\begin{figure}[!h]
	\includegraphics[width=5.5cm,height=6cm]{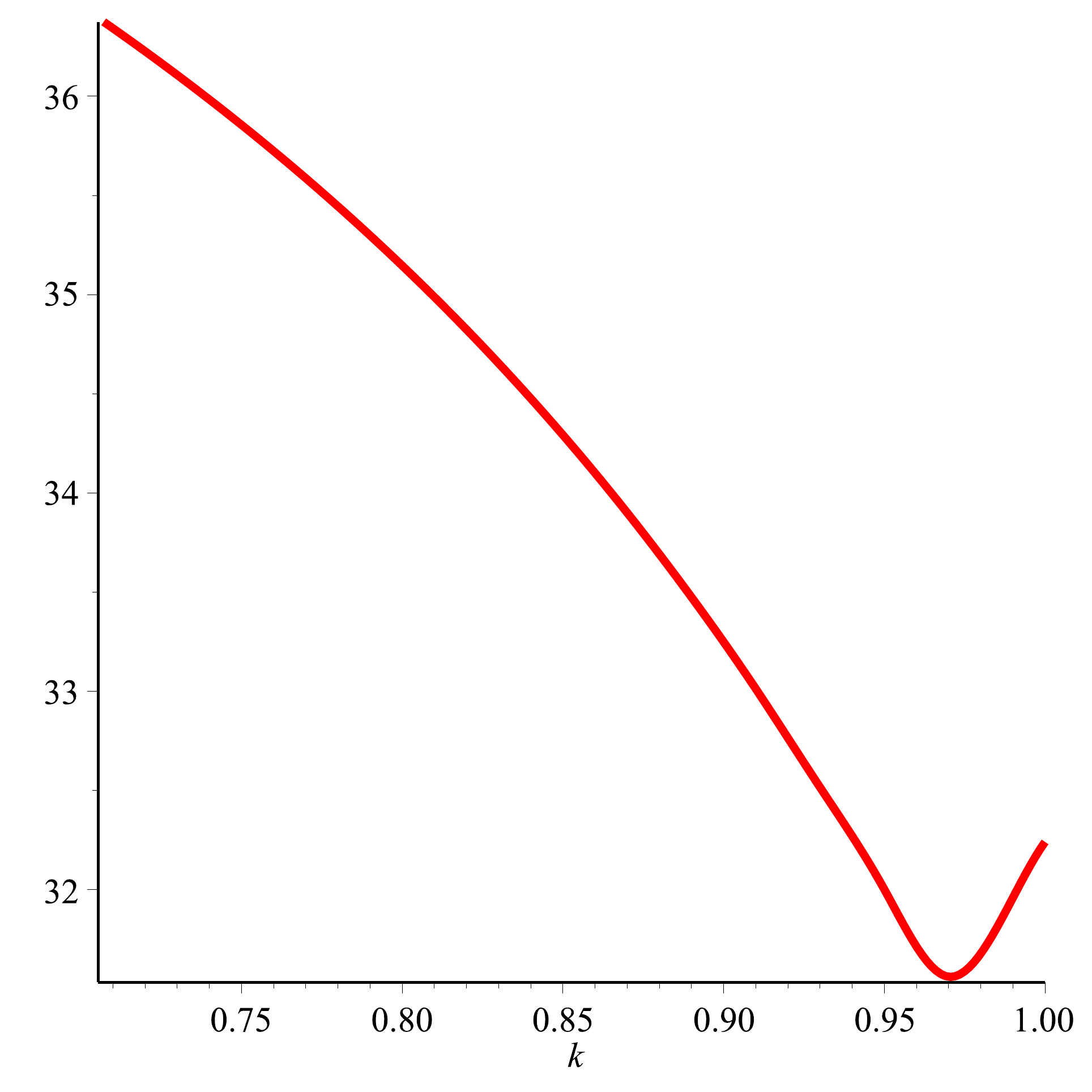}
	\caption{Behaviour of the quantity $\eta(k)$ with $k \in (\frac{1}{\sqrt{2}}, 1)$.}
	\label{Figura}
\end{figure}
\end{proof}

Results above allow us to determine the spectral stability of the multiple periodic wave $\Phi = (\varphi, B\varphi,0,0)$ with cnoidal profile. The analysis is quite similar as determined in the last subsection, and so we only give the main steps.

\textbf{Case I:} $\gamma \in (0,\min\{\kappa_1,\kappa_2\})$. Since ${\rm z}(\mathcal{L})=2$, we have that $V$ is a $2\times 2$ matrix and given by 
\begin{equation*}
	V = \left( \begin{array}{cc}
	(2\gamma - \kappa_1 - \kappa_2)^2 ( \mathcal{L}_2^{-1} \varphi', \varphi' )_{L^2_{per}} & 0 \\
		0 & (\gamma - \kappa_2)^{-2} ( \mathcal{L}_1^{-1} \varphi,\varphi )_{L^2_{per}}
	\end{array}
	\right).
\end{equation*}
Using the results in Lemmas \ref{dif-norma} and \ref{LemmaL2-cn}, we obtain respectively that $(\mathcal{L}_1^{-1}\varphi,\varphi) = - \frac{d}{d\omega} \|\varphi\|_{L^2_{per}}^2 < 0$ and $(\mathcal{L}_2^{-1} \varphi',\varphi')_{L^2_{per}} > 0$, so that ${\rm n}(V) = 1$. On the other hand, since we have  ${\rm n}(\mathcal{L}) = 5$, we see that the difference ${\rm n}(\mathcal{L}) - {\rm n}(V) = 4$ is an even number. Therefore, we can not conclude that the multiple solution $\Phi = (\varphi,B\varphi,0,0)$ is spectrally stable or not.

\textbf{Case II:} $\gamma \in (\max\{\kappa_1,\kappa_2\},+\infty)$. In this specific case, it is possible to see that we can not decide the exact quantity of negative eigenvalues (see \eqref{tricotomy}) in order to apply the results in \cite{KapitulaKevrekidisSandstedeI} and \cite{KapitulaKevrekidisSandstedeII}. Therefore, we can not conclude a precise result of spectral stability for the multiple solution $\Phi = (\varphi,B\varphi,0,0)$.

\textbf{Case III:} $\gamma = 0$. Since ${\rm z}(\mathcal{L})=4$, we can proceed as in the third case of the last subsection to obtain
\begin{equation*}
	V = \left( \begin{array}{cccc}
		( \mathcal{L}_2^{-1} \varphi', \varphi' )_{L^2_{per}} & 0 & 0 & 0 \\
		0 & ( \mathcal{L}_2^{-1} \varphi',\varphi' )_{L^2_{per}} & 0 & 0 \\
		0 & 0 & ( \mathcal{L}_1^{-1} \varphi, \varphi )_{L^2_{per}} & 0 \\
		0 & 0 & 0 & ( \mathcal{L}_1^{-1} \varphi, \varphi )_{L^2_{per}}
	\end{array}
	\right).
\end{equation*}
Since by Lemmas \ref{dif-norma} and \ref{LemmaL2-cn} we have ${\rm n}(V) = 2$, the fact that ${\rm n}(\mathcal{L}) = 6$ gives us that the spectral stability result is also inconclusive.

\textbf{Case IV:} $\gamma = \kappa_1 = \kappa_2$. Since in this case ${\rm z}(\mathcal{L})=3$, we can proceed as in the fourth case of the last subsection with cnoidal profile instead of dnoidal profile to obtain $V$ as
\begin{equation*}
	V = \left( \begin{array}{ccc}
		( \mathcal{L}_2^{-1} \varphi',\varphi' )_{L^2_{per}} & 0 & 0 \\
		0 & (B^2 + 1)^2 ( \mathcal{L}_3^{-1} \varphi,\varphi )_{L^2_{per}} & 0 \\
		0 & 0 & (B^2+1)^2 ( \mathcal{L}_1^{-1} \varphi, \varphi )_{L^2_{per}}
	\end{array}
	\right).
\end{equation*}
Using Remark \ref{positivity} and Lemmas \ref{dif-norma} and \ref{LemmaL2-cn}, we have ${\rm n}(V) = 1$ and since ${\rm n}(\mathcal{L}) = 4$, we obtain that the difference ${\rm n}(\mathcal{L}) - {\rm n}(V)$ is an odd number. Consequently, the multiple solution $\Phi = (\varphi, B \varphi,0,0)$ is spectrally unstable.\\
\indent Summarizing the results above, we can conclude the statement of Theorem \ref{teo-2}.

\subsubsection{Spectral stability of cnoidal waves in the subspace of odd functions} Here we are going to answer some unclear points left behind concerning the spectral stability of periodic multiple solutions $\Phi$ with cnoidal profile. The arguments will be the same, but we need to pay attention with the spectral analysis established in the space $\mathbb{L}_{per,odd}^2$.

From now on, let us consider the periodic solution $\psi \in H^1_{per,odd}$ given by \eqref{cnoidal-2}. 

\textbf{Case I:} $\gamma \in (0,\min\{\kappa_1,\kappa_2\})$. Since in this case ${\rm z}(\mathcal{L}_{odd})=1$, we have that $V$ is given in a simple way as
\begin{equation}\label{V-2}
	V =  ( \mathcal{L}_1^{-1} \psi,\psi )_{L^2_{per}}.
\end{equation}

By Lemma \ref{dif-norma} we have ${\rm n}(V) = 1$ and since ${\rm n}(\mathcal{L}_{odd}) = 2$, we obtain that the difference ${\rm n}(\mathcal{L}_{odd}) - {\rm n}(V) = 1$ is an odd number. Therefore, we conclude that the multiple solution $\Psi = (\psi,B\psi,0,0)$ is spectrally unstable.

\textbf{Case II:} $\gamma \in (\max\{\kappa_1,\kappa_2\},+\infty)$. In this case, $V$ is an one-dimensional matrix given by same expression in \eqref{V-2}. Since we also have ${\rm n}(V) = 1$ and ${\rm n}(\mathcal{L}_{odd}) = 1$, we deduce that the multiple solution $\Psi = (\psi,B\psi,0,0)$  is spectrally stable.

\textbf{Case III:} $\gamma = 0$. We have ${\rm z}(\mathcal{L}_{odd})=2$ and the matrix $V$ is now given by
\begin{equation*}
	V = \left( \begin{array}{cc}
	 ( \mathcal{L}_1^{-1} \psi, \psi )_{L^2_{per}} & 0 \\
	 0 & ( \mathcal{L}_1^{-1} \psi, \psi )_{L^2_{per}}
	\end{array}
	\right).
\end{equation*}
By Lemma $\ref{dif-norma}$ and since $( \mathcal{L}_1^{-1} \psi, \psi )_{L^2_{per}}=- \frac{1}{2} \frac{d}{d\omega} \|\psi\|_{L^2_{per}}^2=-\frac{1}{2}\frac{d}{d\omega}||\varphi||_{L_{per}^2}^2 < 0$, we conclude ${\rm n}(V) = 2$. On the other hand, the fact ${\rm n}(\mathcal{L}_{odd}) = 2$ gives us that the difference ${\rm n}(\mathcal{L}_{odd}) -{\rm n}(V) =0$ and the multiple solution $\Psi = (\psi,B\psi,0,0)$  is then spectrally stable.

\textbf{Case IV:} $\gamma = \kappa_1 = \kappa_2$. Again, we have ${\rm z}(\mathcal{L}_{odd})=2$ and $V$ is given by
\begin{equation*}
	V = \left( \begin{array}{cc}
		 (B^2 + 1)^2 ( \mathcal{L}_3^{-1} \psi,\psi )_{L^2_{per}} & 0 \\
		 0 & (B^2+1)^2 ( \mathcal{L}_1^{-1} \psi, \psi )_{L^2_{per}}
	\end{array}
	\right).
\end{equation*}
By Remark \ref{positivity} and since $\mathcal{L}_3$ is positive, it follows that $( \mathcal{L}_3^{-1} \psi,\psi )_{L^2_{per}}>0$. On the other hand, by Lemma \ref{dif-norma} we obtain $( \mathcal{L}_1^{-1} \psi, \psi )_{L^2_{per}}=-\frac{1}{2}\frac{d}{d\omega}||\psi||_{L_{per}^2}^2<0$ and so, ${\rm n}(V) = 1$. Since ${\rm n}(\mathcal{L}_{odd}) = 1$, we deduce that the multiple solution $\Psi = (\psi,B\psi,0,0)$  is also spectrally stable.

Summarizing the above, we conclude the result in Theorem \ref{teo-odd}.

\begin{remark}\label{remstab1}
	
	As we have detailed in Remark $\ref{remstab}$, the abstract theories in \cite{grillakis1} and \cite{grillakis2} can be used to establish the orbital stability of periodic cnoidal waves in $\mathbb{H}_{per,odd}^1$ in the second case (Case II).
\end{remark}

\section{Spectral stability for the semi-trivial periodic solution with cnoidal profile}\label{section6}

An important aspect concerning the solution $(u,v)$ of the NLS system \eqref{NLS-system} is the existence of semi-trivial solutions
\begin{equation}\label{semitrivial-sol} 
	(u(x,t),v(x,t)) = (e^{i\omega t} \varphi(x), 0)
\end{equation}
of \eqref{NLS-system}. As we have already mentioned in the introduction, Hakkaev in \cite{Hakkaev} studied the spectral stability for the semi-trivial wave solution \eqref{semitrivial-sol} where $\varphi$ has a dnoidal profile. Our intention is to prove the spectral stability when $\varphi$ has a cnoidal profile.\\
\indent We follow as in Section 2. First, we substitute \eqref{semitrivial-sol} into \eqref{NLS-system} to obtain the ODE
\begin{equation}\label{ode-semitrivial} 
	-\varphi'' + \omega \varphi - \kappa_1 \varphi^3 = 0.
\end{equation}

\indent A similar result as determined in Theorem $\ref{cnoidalcurve}$ is now presented.

\begin{theorem}\label{prop-varphi-semitrivial}
	Let $L > 0$ be fixed. The equation \eqref{ode-semitrivial} has an $L$-periodic solution with cnoidal profile of the form 
	\begin{equation*}\label{phi-semitrivial}
		\varphi_\omega(x) = \frac{ \sqrt{2 \omega} k}{(2k^2 -1 )} \frac{1}{\sqrt{\kappa_1}} {\rm cn} \left( \frac{4 {\rm K}(k)}{L} x, k \right),
	\end{equation*}
where $\omega > 0$ is given by \eqref{omega-cnoidal} depends smoothly on $k \in \left(\frac{1}{\sqrt{2}},1\right)$ and $L>0$. In addition, the family
\begin{equation*}
	\omega \in (0,+\infty) \longmapsto \varphi = \varphi_\omega \in H^2_{per}([0,L])
\end{equation*}
of $L$-periodic solutions of \eqref{ode-semitrivial} depends smoothly on $\omega \in (0,+\infty)$.
\end{theorem}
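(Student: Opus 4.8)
The plan is to obtain the result as a direct consequence of Theorem~\ref{cnoidalcurve} via the same scaling already exploited in \eqref{cnoidalsol}. First I would recall from Angulo~\cite{angulo2007} that the normalized equation $-\phi'' + \omega \phi - \phi^3 = 0$ admits, for each $k \in \left(\frac{1}{\sqrt{2}},1\right)$, the cnoidal solution $\phi$ given by \eqref{phi}, with frequency $\omega$ determined by \eqref{omega-2}. Setting $\varphi = \kappa_1^{-1/2}\phi$ and substituting into \eqref{ode-semitrivial}, a one-line computation shows that the cubic coefficient is rescaled exactly by the factor $\kappa_1$: if $\phi$ solves the normalized equation, then $\kappa_1^{-1/2}\phi$ solves $-\varphi'' + \omega\varphi - \kappa_1\varphi^3 = 0$, which is precisely \eqref{ode-semitrivial}. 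This produces the cnoidal profile $\varphi_\omega$ displayed in the statement (namely $\kappa_1^{-1/2}$ times the normalized profile \eqref{phi}), and since the scaling leaves the spatial argument of $\cn$ untouched, the frequency relation \eqref{omega-cnoidal} is inherited verbatim from \eqref{omega-2}.

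Next I would establish the smooth dependence on $\omega$. Because the scaling factor $\kappa_1^{-1/2}$ is a fixed constant independent of $k$, the map $k \mapsto \varphi$ into $H^2_{per}$ is smooth whenever $k \mapsto \phi$ is, and the latter follows from the smooth dependence of $\K(k)$ and of the Jacobi function $\cn(\cdot,k)$ on the modulus $k$. It therefore remains to invert the relation $\omega = \omega(k)$ smoothly. As in the proof of Theorem~\ref{cnoidalcurve}, I would verify that
\[
\omega(k) = \frac{16\,\K(k)^2 (2k^2-1)}{L^2}
\]
is a strictly increasing smooth bijection from $\left(\frac{1}{\sqrt{2}},1\right)$ onto $(0,+\infty)$: one checks the boundary behaviour $\omega(k)\to 0^+$ as $k\to \left(\frac{1}{\sqrt{2}}\right)^+$ and $\omega(k)\to +\infty$ as $k\to 1^-$, together with $\frac{d\omega}{dk}>0$ throughout the interval, using the standard derivative formula for $\K(k)$. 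The inverse function theorem then yields a smooth map $\omega \mapsto k(\omega)$, and composing with $k \mapsto \varphi$ produces the desired smooth curve $\omega \in (0,+\infty) \longmapsto \varphi_\omega \in H^2_{per}$.

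The only genuinely quantitative step is the monotonicity $\frac{d\omega}{dk}>0$, which I expect to be the main (though routine) obstacle. However, since the frequency function $\omega(k)$ here coincides exactly with the one already analyzed for the cnoidal curve in Theorem~\ref{cnoidalcurve}, this monotonicity can be borrowed directly and no new computation is required. In short, replacing the coefficient $\kappa_1 + \gamma B^2$ by $\kappa_1$ changes nothing in the argument beyond the value of the fixed scaling constant, so the existence and smoothness claims follow immediately from the cnoidal case.
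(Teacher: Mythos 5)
Your proposal is correct and follows essentially the same route as the paper: the paper obtains this theorem exactly as it obtains Theorem \ref{cnoidalcurve}, namely by scaling Angulo's normalized cnoidal solution, here with the constant $\kappa_1^{-1/2}$ in place of $(\kappa_1+\gamma B^2)^{-1/2}$, and inverting the monotone frequency map $\omega(k)$. Incidentally, your computation yields the denominator $\sqrt{2k^2-1}$, consistent with \eqref{phi} and \eqref{cn}; the factor $(2k^2-1)$ appearing without the square root in the theorem's displayed formula is evidently a typographical slip in the paper.
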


The spectral problem to be studied in this case is $J \mathcal{L} u = \lambda u$, where $J$ is given by \eqref{J} and $\mathcal{L}$ is defined as 
\begin{equation}\label{L-semitrivial}
	\mathcal{L} = \left( \begin{array}{cccc}
		\mathcal{L}_1 & 0 & 0 & 0 \\
		0 & \mathcal{L}_3 & 0 & 0 \\
		0 & 0 & \mathcal{L}_2 & 0 \\
		0 & 0 & 0 & \mathcal{L}_4
	\end{array}
\right).
\end{equation}
Since $\mathcal{L}$ is a diagonal operator, we only need to analyze the spectral properties of the operators $\mathcal{L}_i$, $i=1,2,3,4$, where
\begin{align*}
\mathcal{L}_1 & = -\partial_x^2 + \omega - 3\kappa_1 \varphi^2, \\
\mathcal{L}_2 & = -\partial_x^2 + \omega - \kappa_1 \varphi^2, \\
\mathcal{L}_3 & = -\partial_x^2 + \omega - \gamma \varphi^2, \\
\mathcal{L}_4 & = -\partial_x^2 + \omega + \gamma \varphi^2.
\end{align*}

As determined in Subsection $\ref{cnoidalprof}$, we have that 
\begin{equation}\label{nL-semitrivial}
	{\rm n}(\mathcal{L}_1) = 2, \; \; {\rm n}(\mathcal{L}_2) = 1 \; \; \text{ and } \; \; {\rm n}(\mathcal{L}_4) = 0.
\end{equation}
In addition, 
\begin{equation}\label{kerL-semitrivial}
	{\rm Ker}(\mathcal{L}_1) = [\varphi'], \; \; {\rm Ker}(\mathcal{L}_2) = [\varphi] \; \; \text{ and } \; \; {\rm Ker}(\mathcal{L}_4) = \{0\}.
\end{equation}
Thus, the spectral analysis of the operator $\mathcal{L}$ changes according to the spectral analysis of the operator $\mathcal{L}_3$. Here, we consider the same cases for $\gamma$ as determined in \cite{Hakkaev}.

\textbf{Case I:} $\gamma \in (0,\kappa_1)$. Here, we have that
\begin{equation*}
	\mathcal{L}_1 < \mathcal{L}_2 < \mathcal{L}_3 < \mathcal{L}_4.
\end{equation*}
Using the comparison theorem, we can not obtain the exact values of ${\rm n}(\mathcal{L}_3)$ and ${\rm z}(\mathcal{L}_3)$ since we obtain three different different scenarios:
\begin{equation*}
	{\rm n}(\mathcal{L}_3) = 1 \text{ and } {\rm z}(\mathcal{L}_3) = 0, \; \; \; \;	{\rm n}(\mathcal{L}_3) = 0 \text{ and } {\rm z}(\mathcal{L}_3) = 1 \; \; \text{ or } \; \; 	{\rm n}(\mathcal{L}_3) = {\rm z}(\mathcal{L}_3) = 0.
\end{equation*}
Thus, the spectral analysis becomes inconclusive.

\textbf{Case II:} $\gamma = k_1$. In this case, it follows that $\mathcal{L}_2 = \mathcal{L}_3$ and we obtain
\begin{equation*}
	{\rm n}(\mathcal{L}) = 4 \text{ and } {\rm Ker}(\mathcal{L}) = \left[ (\varphi',0,0,0), (0,\varphi,0,0), (0,0,\varphi,0) \right].
\end{equation*}
To determine the spectral stability result, we need to obtain ${\rm n}(V)$. Since ${\rm z}(\mathcal{L})=3$, one has
\begin{equation*}
	V = \left( \begin{array}{ccc}
		( \mathcal{L}_2^{-1} \varphi',\varphi')_{L^2_{per}} & 0 & 0 \\
		0 & (\mathcal{L}_1^{-1} \varphi,\varphi)_{L^2_{per}} & 0 \\
		0 & 0 & (\mathcal{L}_4^{-1} \varphi, \varphi)_{L^2_{per}}
	\end{array}
\right).
\end{equation*} 
Since $\mathcal{L}_4$ is a positive operator, we obtain by Remark $\ref{positivity}$ that $(\mathcal{L}_4^{-1}\varphi,\varphi)_{L^2_{per}} > 0$. On the other hand, using Lemma \ref{LemmaL2-cn}, we also obtain $(\mathcal{L}_2^{-1}\varphi,\varphi)_{L^2_{per}} > 0$ and by Lemma \ref{dif-norma}, we deduce $(\mathcal{L}_{1}^{-1} \varphi,\varphi) < 0$. Gathering all informations, we conclude that ${\rm n}(V) = 1$ and the difference ${\rm n}(\mathcal{L}) - {\rm n}(V) = 4 - 1 = 3$ is an odd number. The periodic semi-trivial solution $\Phi=(\varphi,0,0,0)$ is then spectrally unstable concluding the desired result in Theorem \ref{teosemi}.

\textbf{Case III:} $\gamma \in (\kappa_1,3\kappa_1)$. In this case, we obtain the following inequality
\begin{equation*}
	\mathcal{L}_1 < \mathcal{L}_3 < \mathcal{L}_2 < \mathcal{L}_4.
\end{equation*}
From \eqref{nL-semitrivial}, \eqref{kerL-semitrivial} and using the comparison theorem, we have that ${\rm n}(\mathcal{L}_3) = 2$ and ${\rm Ker}(\mathcal{L}_3) = \{0\}$. Thus, 
\begin{equation*}
	{\rm n}(\mathcal{L}) = 5 \text{ and } {\rm Ker}(\mathcal{L}) = \left[ (\varphi',0,0,0), (0,0,\varphi,0) \right].
\end{equation*}
\indent To determine the spectral stability, we need to consider the matrix $V$ given by
\begin{equation*}
	V = \left( \begin{array}{cc}
		(\mathcal{L}_2^{-1} \varphi',\varphi')_{L^2_{per}} & 0 \\
		0 & (\mathcal{L}_1^{-1}\varphi,\varphi)_{L^2_{per}}
		\end{array}
	\right).
\end{equation*}
Applying again Lemmas \ref{dif-norma} and \ref{LemmaL2-cn}, we have that ${\rm n}(V) = 1$. Since the difference ${\rm n}(\mathcal{L}) - {\rm n}(V) = 4$ is even, we can not conclude the spectral stability.

\textbf{Case IV:} $\gamma = 3\kappa_1$. Here, we have ${\mathcal{L}_1} = \mathcal{L}_3$ and
\begin{equation*}
	{\rm n}(\mathcal{L}) = 5 \text{ and } {\rm Ker}(\mathcal{L}) = \left[ (\varphi', 0,0,0), (0,\varphi',0,0),(0,0,\varphi,0)\right].
\end{equation*}
Since ${\rm z}(\mathcal{L})=3$, the matrix $V$ becomes in this case
\begin{equation*}
	V = \left( \begin{array}{ccc}
		( \mathcal{L}_2^{-1} \varphi',\varphi')_{L^2_{per}} & 0 & 0 \\
		0 & (\mathcal{L}_1^{-1} \varphi,\varphi)_{L^2_{per}} & 0 \\
		0 & 0 & (\mathcal{L}_4^{-1} \varphi', \varphi')_{L^2_{per}}
	\end{array}
	\right).
\end{equation*}
Again, we can apply Remark \ref{positivity}, Lemma \ref{dif-norma} and Lemma \ref{LemmaL2-cn} to obtain ${\rm n}(V) = 1$. Since the difference ${\rm n}(\mathcal{L}) -n(V)=4$ is even, we can not decide about the spectral stability.

\subsection{Spectral stability of cnoidal waves in the subspace of odd functions} We consider the spectral stability for the semi-trivial wave solution $\Psi = (\psi,0,0,0)$ where $\psi$ in defined by 
$(\ref{cnoidal-odd})$. The reason for that is to fill the gaps left by our analysis performed in the case of the cnoidal profile $\Phi=(\varphi,0,0,0)$.

Since $\psi \in H^1_{per,odd}$, we can consider the linearized operator $\mathcal{L}$, $i=1,2,3,4$ restricted to the space $\mathbb{L}_{per,odd}^2$ as
\begin{equation*}
	\mathcal{L}_{odd} : \mathbb{H}^2_{per,odd} \subset \mathbb{L}^2_{per,odd} \rightarrow \mathbb{L}^2_{per,odd}
\end{equation*}
where $\mathcal{L}$ is given in \eqref{L-semitrivial} and also defined in $\mathbb{L}_{per,odd}^2$. Thus, as determined in Subsection \ref{subsection-odd}, we have that
\begin{equation}\label{nL-semitrivial-odd}
	{\rm n}(\mathcal{L}_{1,odd}) = 1, \; \; {\rm n}(\mathcal{L}_{2,odd}) = 0 \; \; \text{ and } \; \; {\rm n}(\mathcal{L}_{4,odd}) = 0.
\end{equation}
In addition, we get
\begin{equation}\label{kerL-semitrivial-odd}
{\rm Ker}(\mathcal{L}_{2,odd}) = [\psi] \; \; \text{ and } \; \; {\rm Ker}(\mathcal{L}_{1,odd}) = {\rm Ker}(\mathcal{L}_{4,odd}) = \{0\}.
\end{equation}

According to \eqref{nL-semitrivial-odd}, we need to observe that the number of eigenvalues of the operator $\mathcal{L}_{odd}$ is smaller when compared to the complete operator $\mathcal{L}$. This fact is useful to use again the comparison theorem. We shall describe better our intentions in the four cases ahead:\\
\indent \textbf{Case I:} $\gamma \in (0,\kappa_1)$. In this case, we have the inequality $\mathcal{L}_{1,odd} < \mathcal{L}_{2,odd} < \mathcal{L}_{3,odd} < \mathcal{L}_{4,odd}$. Using the comparison theorem and the informations in \eqref{nL-semitrivial-odd} and \eqref{kerL-semitrivial-odd}, we have that ${\rm n}(\mathcal{L}_{3,odd}) = 0$ and ${\rm Ker}(\mathcal{L}_{3,odd}) = \{0\}$. Thus, 
\begin{equation*}
	{\rm n}(\mathcal{L}_{odd}) = 1 \; \text{ and } \; {\rm Ker}(\mathcal{L}_{odd}) = \left[ (0,0,\psi,0) \right].
\end{equation*}
To evaluate the spectral stability, we need to see that ${\rm z}(\mathcal{L}_{odd})=1$, so that the matrix $V$ is given by
\begin{equation*}
	V = (\mathcal{L}_{1}^{-1} \, \psi,\psi)_{L^2_{per}}.
\end{equation*}
By Lemma \ref{dif-norma}, we have that ${\rm n}(V) = 1$ and the difference ${\rm n}(\mathcal{L}_{odd}) - {\rm n}(V)=0$. Thus, we conclude that the semi-trivial wave solution $\Psi = (\psi,0,0,0)$  is spectrally stable.\\
\indent \textbf{Case II:} $\gamma = \kappa_1$. Here, we have $\mathcal{L}_{2,odd} = \mathcal{L}_{3,odd}$. Using \eqref{nL-semitrivial-odd} and \eqref{kerL-semitrivial-odd}, we get
\begin{equation*}
	{\rm n}(\mathcal{L}_{odd}) = 1 \text{ and } {\rm Ker}(\mathcal{L}_{odd}) = \left[ (0,\psi,0,0), (0,0,\psi,0)  \right].
\end{equation*}
Since ${\rm z}(\mathcal{L}_{odd})=2$, the matrix $V$ is given by 
\begin{equation*}
	V = \left( \begin{array}{cc}
		(\mathcal{L}_{1}^{-1} \, \psi,\psi )_{L^2_{per}} & 0 \\
		0 & (\mathcal{L}_{4}^{-1} \, \psi,\psi)_{L^2_{per}}
	\end{array}
\right).
\end{equation*}
Since $\mathcal{L}_{4,odd}$ is positive, we obtain by Remark \ref{positivity} that $(\mathcal{L}_{4}^{-1} \, \psi,\psi)_{L^2_{per}}>0$. In addition, by Lemma \ref{dif-norma}, we see
$(\mathcal{L}_{1}^{-1} \, \psi,\psi )_{L^2_{per}}<0$, so that ${\rm n}(V) = 1$. Thus, the semi-trivial periodic wave $\Psi = (\psi,0,0,0)$  is spectrally stable.

\textbf{Case III:} $\gamma \in (\kappa_1, 3 \kappa_1)$. Here, we have the inequality $\mathcal{L}_{1,odd} < \mathcal{L}_{3,odd} < \mathcal{L}_{2,odd} < \mathcal{L}_{4,odd}$. Then, by the informations in \eqref{nL-semitrivial-odd}, \eqref{kerL-semitrivial-odd} and the comparison theorem, we obtain ${\rm n}(\mathcal{L}_{3,odd}) = 1$, ${\rm Ker}(\mathcal{L}_{3,odd}) = \{0\}$, ${\rm n}(\mathcal{L}_{odd}) = 2$ and ${\rm Ker}(\mathcal{L}_{odd}) = \left[ (0,0,\psi,0) \right]$. Again, since ${\rm z}(\mathcal{L}_{odd})=1$, the matrix $V$ is given by $V = (\mathcal{L}_{1}^{-1} \, \psi,\psi)_{L^2_{per}}$. By Lemma \ref{dif-norma}, we have ${\rm n}(V) = 1$, so that the semi-trivial solution $\Psi = (\psi,0,0,0)$ is spectrally unstable.

\textbf{Case IV:} $\gamma = 3 \kappa_1$. Here, we have $\mathcal{L}_{1,odd} = \mathcal{L}_{3,odd}$ and from \eqref{nL-semitrivial-odd} and \eqref{kerL-semitrivial-odd}, we have that ${\rm n}(\mathcal{L}_{odd}) = 2$ and ${\rm Ker}(\mathcal{L}_{odd}) = \left[ (0,0,\psi,0) \right].$ As we have determined in the last case, we also have ${\rm n}(V) = 1$ and the semi-trivial solution $\Psi = (\psi,0,0,0)$  is spectrally unstable.\\
\indent Summarizing the arguments above, we have proved Theorem $\ref{teooddsemi}$.

\begin{remark}\label{remstab2}
	
	As we have mentioned in Remarks $\ref{remstab}$ and $\ref{remstab1}$, the abstract theory in \cite{grillakis1} and \cite{grillakis2} can be used to establish the orbital stability of periodic cnoidal waves in $\mathbb{H}_{per,odd}^1$ in the first case (Case I). The orbital instability is deduced from the same work in the third and fourth cases (Cases III and IV).
\end{remark}

\section*{Acknowledgments}
F. Natali is partially supported by CNPq/Brazil (grant 303907/2021-5) and CAPES MathAmSud (grant 88881.520205/2020-01). G. E. Bittencourt Moraes is supported by Coordenação de Aperfeiçoamento de Pessoal de Nível Superior (CAPES)/Brazil - Finance code 001. A part of this work was developed when the first author was visiting the Departamento de Matemáticas of the Universidad Del Valle, Cali, Colombia. The first author would like to express his sincere thanks for their hospitality.

\end{document}